\newtheorem{thm}{Theorem}[section]
\newtheorem{prop}[thm]{Proposition}
\newtheorem{lem}[thm]{Lemma}
\newtheorem{prob}[thm]{Problem}
\theoremstyle{definition}
\newtheorem{exam}[thm]{Example}
\newtheorem{defn}[thm]{Definition}
\newtheorem{rem}[thm]{Remark}
\numberwithin{equation}{section}
\numberwithin{figure}{section}
\def\Dio{{\rm dio}}
\newcommand{\xvdash}[1]{%
  \vdash^{\mkern-10mu\scriptscriptstyle\rule[-.9ex]{0pt}{0pt}#1}%
}
\newcommand{\A}{\mathcal{A}}
\newcommand{\M}{\mathcal{M}}
\begin{document}

\bibliographystyle{plain}
\title[]{On the computational complexity of algebraic numbers: the Hartmanis--Stearns problem revisited}
\author{Boris Adamczewski}
\address{
CNRS, Universit\'e de Lyon, Universit\'e Lyon 1\\
Institut Camille Jordan  \\
43 boulevard du 11 novembre 1918 \\
69622 Villeurbanne Cedex, France}
\email{Boris.Adamczewski@math.cnrs.fr}

\author{Julien Cassaigne}

\address{
CNRS, Aix-Marseille Universit\'e \\
Institut de Math\'ematiques de Marseille  \\
case 907, 163 avenue de luminy \\
13288 Marseille Cedex 9, France  
}

\email{Julien.Cassaigne@math.cnrs.fr}

\author{Marion Le Gonidec}

\address{Universit\'e de la R\'eunion\\
Laboratoire d'Informatique et de Math\'ematiques \\
Parc technologique universitaire, 2 rue Joseph Wetzell\\
97490 Sainte-Clotilde, France
}

\email{marion.le-gonidec@univ-reunion.fr}

\thanks{This project has received funding from the European Research Council (ERC) under the European Union's Horizon 2020 research and innovation programme 
under the Grant Agreement No 648132.}
%%%%%%%%%%%%%%%%%%%%%%%%%%%%%%%%%%%%%%
\begin{abstract}  
We consider the complexity of integer base expansions of algebraic irrational numbers from a computational 
point of view.  A major contribution in this area is that the base-$b$ expansion of algebraic irrational real numbers cannot be 
generated by finite automata. Our aim is to provide two natural generalizations of this theorem. 
Our main result is that the base-$b$ expansion of algebraic irrational real numbers cannot be 
generated by deterministic pushdown automata.  
Incidentally, this completely solves the Hartmanis--Stearns problem for the class of 
multistack machines. We also confirm an old claim of Cobham from 1968 proving that such real numbers cannot be generated by  tag machines with dilation factor 
larger than one. In order to stick with the modern terminology, 
we also show that  the latter generate the same class of real numbers than morphisms with exponential growth. 
\end{abstract}
%%%%%%%%%%%%%%%%%%%%%%%%%%%%%%%%%%%%%%

\maketitle
%\tableofcontents
%%%%%%%%%%%%%%%%%%%%%%%%%%%%%%%%%%%%%%

\section{Introduction}

An old source of frustration for mathematicians arises from the study of integer base expansions 
of classical constants like 
$$
\begin{array}{cccc}
&\sqrt 2 &= &1.414\,213\, 562\, 373\, 095\, 048\, 801\, 688\, 724\, 209\, 698\, 078\, 569 \cdots \\ 
\text{ or} & &&\\

&  \pi& = &3.141\, 592\, 653\, 589\, 793\, 238\, 462\, 643\, 383\, 279\, 502\,884\, 197  \cdots 
\end{array}
$$
While these numbers admit very simple geometric descriptions, a close look at their digital expansion suggest highly 
complex phenomena. 
Over the years, different ways have been envisaged to formalize this old problem. This reoccurring 
theme appeared  in particular in three fundamental papers using: 
the language of probability after \'E. Borel \cite{Bo09}, the language of dynamical systems 
after Morse and Hedlund \cite{HM38}, and the language of Turing machines after Hartmanis and 
Stearns \cite{HS65}. Each of these points of view leads to a different assortment  of challenging conjectures.  
 As its title suggests, the present paper will focus on the latter approach. 
 It is addressed to researchers interested both in Number Theory and Theoretical Computer Science. 
 In this respect, we took care to make the paper as self-contained as possible and hopefully readable 
 by members from these different communities. 

\medskip

After the seminal work of Turing \cite{Turing}, real numbers can be rudely divided into two classes. 
On one side we find computable real numbers, those whose base-$b$ expansion can be produced 
by a Turing machine, while on the other side lie uncomputable real numbers which will belong for ever  
beyond the ability of  computers.  Note that, though most real numbers belong to the second class, 
classical mathematical constants are usually computable. This is in particular the case of any algebraic number.  
However, among computable numbers, 
some are quite easy to compute while others seem to have an inherent complexity that make 
them difficult to compute. In 1965, Hartmanis and Stearns \cite{HS65} investigated the fundamental 
question of how hard a real number may be to compute, introducing the now classical time complexity classes.   
The notion of time complexity takes into account the number $T(n)$ of operations needed by a multitape deterministic 
Turing machine to produce the first $n$ digits of the expansion. In this regard, a real number 
is considered all the more simple as its base-$b$ expansion can be produced very fast by a Turing machine.  
At the end of their paper, Hartmanis and Stearns suggested the following problem.

\medskip

\noindent {\bf\itshape Problem {\rm\bf HS}.} --- 
\emph{ Do there exist  irrational algebraic numbers for which the first $n$ binary digits can be computed in $O(n)$ 
operation by a multitape deterministic Turing machine?
}

\medskip

Let us  briefly recall why Problem HS is still open and likely uneasy to solve. 
On the one hand, all known approaches to compute efficiently the base-$b$ expansion 
of algebraic irrational numbers intimately relate on the cost of the multiplication $M(n)$ of two 
$n$-digits numbers (see for instance \cite{BB}).   This operation is computable in quasilinear 
time\footnote{This means computable in $O(n\log^{1+\varepsilon}n)$ operations for some $\varepsilon$.} 
but to determine whether one may have $M(n)=O(n)$ or not remains a famous open problem in this area.  
On the other hand, a negative answer to Problem HS\footnote{As observed in \cite{Co68}, this may 
be the less surprising issue.}  
would contain a powerful transcendental statement, 
a very special instance of which is  the transcendence of the following three simple irrational 
real-time 
%\footnote{Real-time computation 
%usually means that a new digit is produced at least 
%every $k$ computational steps for some fixed $k$. 
%In particular, this implies that $T(n)=O(n)$, but the converse does not hold.} 
computable numbers: 
$$
\sum_{n= 1}^{\infty} \frac{1}{2^{n!}} \;,\;  \sum_{n= 1}^{\infty} \frac{1}{2^{n^2}}\; \mbox{ and } 
 \sum_{n= 1}^{\infty} \frac{1}{2^{n^3}} \, \cdot
$$
Of course, for the first one, Liouville's inequality easily does the job. But the transcendance of the 
second number only dates back to 1996 \cite{Be97,DNNS} and its  
 proof requires  the deep work of 
Nesterenko  about algebraic independence of values of Eisenstein's series \cite{Ne96}. 
Finally, the transcendence of the third number remains unknown.

\medskip 

In 1968, Cobham \cite{Co68} (see also \cite{Co68c,Co68b}) was the first to consider the restriction of the Hartmanis-Stearns 
problem to some classes of Turing machines.   
The model of computation he originally investigated  is the so-called {\it Tag machine}.  
Though this model has some historical interest, this terminology 
is not much used today by the computer science community. However, their outputs precisely correspond to the class of 
\emph{morphic sequences}, a well-known object of interest for both mathematicians and computer scientists. 
They are especially used  in combinatorics on words and symbolic dynamics (see for instance \cite{AS,Py,Que}).  
In the sequel, we will present our result with this modern terminology but, for the interested reader, 
we will still describe the connection with tag machines in Section \ref{sec: tag}.  
In his paper, Cobham stated two main theorems without 
proof and only gave some hints that these statements should be deduced from a general transcendence method based on 
some functional equations, now known as Mahler's method. 
%\footnote{Note   
%that Cobham was apparently not aware about Mahler's contribution but only on works of Siegel about 
%$E$-functions thanks to Gelfond's book. See the interesting comments at the end of \cite{Co68b}.}.   
%In this direction, a great deal of works has been done by Loxton and van der Poorten \cite{LvdP1,LvdP2} 
%and Nishioka \cite{Ni90} among others, but it still remains a challenging problem to complete the proofs originally envisaged by Cobham. 
His first claim was finally confirmed by the first author and Bugeaud \cite{AB07}, but  using a totally different 
approach based on a $p$-adic version of the subspace Theorem (see \cite{AB07,ABL})\footnote{Very recently, some advances in Mahler's method \cite{PPH,AF} 
allow to complete the proof originally envisaged by Cobham.}. 

\medskip

\noindent {\bf\itshape Theorem {\rm\bf AB} (Cobham's first claim).} --- 
\emph{ The base-$b$ expansion of an algebraic irrational number cannot be generated by a uniform morphism or, 
equivalently, by a finite automaton.  
}

\medskip

\begin{rem} Theorem AB actually refers to two conceptually quite 
different models of computation: uniform morphisms and finite automata. 
There are two natural ways a multitape deterministic 
%\footnote{Contrary to their use in other classical contexts, 
%machines are always assumed to be deterministic when computing real numbers. This is due to the fact that they have to produce outputs. Thus all 
%machines under consideration in this paper will be deterministic.} 
Turing machine can be used to define computable numbers. 
First, it can be considered as {\it enumerator}, which means that the machine will produce one by one all the digits, 
separated by a special symbol, on its output tape.  Problem HS originally referred to the model of enumerators. 
The other way, referred to as {\it Turing transducer}, consists in feeding to the machine some input representing a positive 
integer $n$ and asking that the machine compute the $n$-th digit on its output tape.  
In Theorem AB, uniform morphisms (or originally uniform tag machines) are 
enumerators while finite automata are used as transducers\footnote{Note that, used as enumerators, finite automata can only produce 
eventually periodic sequences of digits and thus rational numbers. In contrast, used as transducers, finite automata output the interesting 
class of automatic sequences.}. The fact that these two models are equivalent is due to Cobham 
\cite{Co72}. 
\end{rem}

Theorem AB is the main contribution up to date toward a negative solution to Problem HS. In this paper, we show  
that the approach developed in \cite{AB07,ABL} leads to two interesting generalizations 
of this result.  Our first generalization is concerned with enumerators. In this direction, we 
confirm the second claim of Cobham.  
 
 \begin{thm}[Cobham's second claim]\label{thm:2bis}
The base-$b$ expansion of an algebraic irrational number cannot be generated by a morphism with exponential growth. 
 \end{thm}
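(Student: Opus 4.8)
The plan is to combine a combinatorial analysis of the self-similar structure of morphic sequences of exponential growth with the transcendence criterion underlying Theorem~AB, namely the consequence of the $p$-adic Subspace Theorem established in \cite{AB07,ABL}. I would argue by contradiction: assume that an algebraic irrational number $\xi$ has a base-$b$ expansion $\mathbf{u}=(u_n)_{n\ge1}$ generated by a morphism of exponential growth, so that $\mathbf{u}=\tau(\sigma^{\omega}(a))$ for some coding $\tau$ and some morphism $\sigma$, prolongable at $a$, of exponential growth. Since $\xi$ is irrational, $\mathbf{u}$ is not eventually periodic, and I want to conclude that $\xi$ is rational or transcendental, which is the desired contradiction.

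The first step is to reduce to the pure morphic case. As $\tau$ is a coding, it maps an occurrence of a power $V^{w}$ in $\sigma^{\omega}(a)$ to an occurrence of $\tau(V)^{w}$ at the same position, and it sends eventually periodic sequences to eventually periodic sequences; hence $\sigma^{\omega}(a)$ is itself not eventually periodic, and it suffices to treat $\mathbf{u}=\sigma^{\omega}(a)$. One may assume, without loss of generality, that $\sigma$ is non-erasing, prolongable at $a$, and of exponential growth. Writing $\sigma(a)=a\mathbf{x}$ with $\mathbf{x}$ nonempty, the words $\sigma^{n}(a)$ form a nested chain of prefixes of $\mathbf{u}$, with lengths $L_{n}=|\sigma^{n}(a)|$ satisfying $L_{n}\le L_{n+1}\le D\,L_{n}$ for a fixed $D$ and, by exponential growth, $L_{n}\ge\theta^{n}$ for some $\theta>1$ and all $n$ large enough.

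The heart of the proof is then the combinatorial statement that such a $\mathbf{u}$ is \emph{stammering}: there exist a real number $w>1$, a constant $c_{0}$ and finite words $U_{n},V_{n}$ with $|V_{n}|\to\infty$, $|U_{n}|\le c_{0}|V_{n}|$ and $U_{n}V_{n}^{w}$ a prefix of $\mathbf{u}$ --- equivalently, $\Dio(\mathbf{u})>1$. To prove this one sorts the letters occurring in $\mathbf{u}$ into bounded and growing ones; exponential growth forces a growing letter whose image under some power of $\sigma$ contains at least two occurrences of growing letters, so that iterating $\sigma$ produces an expanding ``most repetitive'' block. I expect the argument to split, roughly, into two situations. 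In the first, a favourable recursion is present: the continuation $\sigma^{n}(\mathbf{x})\sigma^{n+1}(\mathbf{x})\cdots$ of $\sigma^{n}(a)$ inside $\mathbf{u}$ begins with a long common prefix of $\sigma^{n}(a)$ --- which happens as soon as iterating ``first letter of the $\sigma$-image'' from the letter following $a$ eventually returns to $a$ --- so that $\mathbf{u}$ has a prefix of length at least $(1+\kappa)L_{n}$ and period $L_{n}$, for a fixed $\kappa>0$ and infinitely many $n$. In the complementary situation one instead exhibits inside $\mathbf{u}$ arbitrarily long runs $c^{N}$, or long approximate powers $W^{N}$, coming from the image of a recursive growing letter, and uses $\sigma(\mathbf{u})=\mathbf{u}$ to relocate such a repetition to a position that is at most a bounded multiple of its period. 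In both situations the exponential-growth hypothesis is what guarantees that the characteristic length $|V_{n}|$ of the repetition is a fixed positive fraction of the position at which it occurs, so that $|U_{n}|/|V_{n}|$ stays bounded; for a morphism of polynomial growth the same constructions produce repetitions sitting at positions far larger than their period, which is precisely why the present method does not reach that case.

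Finally, feeding the stammering property into the combinatorial transcendence criterion of \cite{AB07,ABL} --- a consequence of the $p$-adic Subspace Theorem, used exactly as in the proof of Theorem~AB --- one obtains that $\xi=\sum_{n\ge1}u_{n}b^{-n}$ is rational or transcendental; being irrational, $\xi$ is transcendental, contradicting its algebraicity, and Theorem~\ref{thm:2bis} follows. The Diophantine machinery enters as a black box, so the genuinely new work, and the step I expect to be the main obstacle, is the combinatorial core: that $\mathbf{u}$ contains long repetitions is easy, but producing them inside prefixes with a \emph{bounded} head-to-period ratio, uniformly over every structure allowed by exponential growth --- non-primitive morphisms, bounded letters, fixed points that fail to be uniformly recurrent --- requires the case analysis above together with precise estimates on the lengths $|\sigma^{n}(c)|$ locating where each repetition sits.
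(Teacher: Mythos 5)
Your overall architecture is exactly the paper's: reduce to the fixed point $\sigma^{\omega}(a)$ (a coding cannot decrease the Diophantine exponent), prove that $\Dio(\sigma^{\omega}(a))>1$, and feed this into the Subspace--Theorem criterion of \cite{ABL}. The problem is that the combinatorial core --- which, as you say yourself, is the only genuinely new content beyond Theorem AB --- is left as a sketch whose case analysis does not close. Your first case (the ``first letter of the $\sigma$-image'' orbit of the letter following $a$ returns to $a$) is one favourable configuration, but your ``complementary situation'' is not its negation: a morphism of exponential growth whose fixed point avoids the first case need not contain long runs $c^{N}$ nor long approximate powers $W^{N}$ coming from bounded letters. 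Moreover the step ``uses $\sigma({\bf u})={\bf u}$ to relocate such a repetition to a position that is at most a bounded multiple of its period'' is precisely the difficulty, and no mechanism for it is given: an occurrence of a power deep inside ${\bf u}$ contributes nothing to $\Dio({\bf u})$ unless its length is a definite fraction of the position at which it occurs, and nothing in your sketch forces that.

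The missing idea, which makes any case analysis unnecessary, is the notion of a letter of \emph{maximal growth}: a letter $b$ with $\vert\sigma^{n}(c)\vert<C\vert\sigma^{n}(b)\vert$ for every letter $c$ and every $n$. Using the Salomaa--Soittola asymptotics $c_{1}n^{k}\theta^{n}<\vert\sigma^{n}(a)\vert<c_{2}n^{k}\theta^{n}$ together with $\theta>1$, one shows that letters of maximal growth occur infinitely often in ${\bf u}$: if they all lay in a prefix $\sigma^{n_{0}}(a)$, the increments $\vert\sigma^{n}(V_{0})\vert$ with $V_{0}$ free of such letters would be $o(n^{k}\theta^{n})$, contradicting the lower bound --- and this is exactly where exponential growth enters, not in the way you describe. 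By pigeonhole some maximal-growth letter $b$ occurs twice, so $UbVb$ is a prefix of ${\bf u}$; applying $\sigma^{n}$ yields the prefix $\sigma^{n}(U)\,\sigma^{n}(bV)^{\delta_{n}}$ with $\delta_{n}=1+\vert\sigma^{n}(b)\vert/\vert\sigma^{n}(bV)\vert$, and the maximal growth of $b$ bounds the exponent excess below by $1/\bigl(c_{3}(\vert U\vert+\vert bV\vert)\bigr)$ uniformly in $n$. Without this device (or an equivalent one) your argument does not establish $\Dio({\bf u})>1$ for an arbitrary morphism of exponential growth, so the proof is incomplete at its central step.
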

 
%The definition of a real number generated by a morphism will be given in Section \ref{sec: morphism}.  
We stress that, stated as this, Theorem \ref{thm:2bis} also appeared in the Th\`ese de Doctorat of Julien Albert \cite{Alb}.    
In order to provide a self-contained proof of Cobham's second claim,  which was originally formulated in terms of tag machines, 
we will complete and reprove with permission some content of \cite{Alb} in Section \ref{sec:ja}.  
The fact that Theorem  \ref{thm:2bis} is well equivalent to 
Cobham's second claim will be proved in Section \ref{sec: tag}. \\

Our second and main generalization of Theorem AB is concerned with transducers. 
We consider a classical computation model called the {\it deterministic pushdown automaton}. 
It is of great importance on the one hand for theoretical aspects because of Chomsky's hierarchy \cite{Ch56} in formal language theory   
and on the other for practical applications, especially in parsing (see \cite{HMU}).   Roughly, such a device is a finite automaton with 
in addition a possibly infinite memory organized as a stack. 
Our main result is the following. 

\begin{thm}\label{thm:1bis}
  The base-$b$ expansion of an algebraic irrational number cannot be generated by a deterministic pushdown automaton.
\end{thm}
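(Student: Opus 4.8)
The plan is to derive a contradiction from the assumption that the base-$b$ expansion $\mathbf{a} = (a_n)_{n \geq 1}$ of an algebraic irrational number $\xi$ is generated by a deterministic pushdown automaton $\mathcal{M}$, say reading the base-$k$ representation of $n$ and outputting $a_n$. The engine will be the same Diophantine input used for Theorem AB, namely the combinatorial transcendence criterion obtained in \cite{AB07,ABL} via the $p$-adic Subspace Theorem: if $\mathbf{a}$ is \emph{stammering}, i.e. there are a real number $w > 1$ and finite words $(U_n), (V_n)$ with $U_nV_n^w$ a prefix of $\mathbf{a}$, with $|U_n|/|V_n|$ bounded, and with $|V_n| \to \infty$, then $\xi = \sum_{n\geq 1} a_n b^{-n}$ is rational or transcendental. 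Since $\xi$ is algebraic and irrational, $\mathbf{a}$ is in particular not eventually periodic, so it suffices to prove that $\mathbf{a}$ is either $k$-automatic or stammering.

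The first case is easy. If, over all its inputs, $\mathcal{M}$ uses a bounded amount of stack, then the pairs (state, stack content) range over a finite set and $\mathcal{M}$ is equivalent to a deterministic finite automaton with output; hence $\mathbf{a}$ is $k$-automatic, and Theorem AB already forbids $\xi$ from being algebraic and irrational. So from now on one may assume that the stack of $\mathcal{M}$ grows unboundedly along its inputs.

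Then comes the core of the argument, which is purely combinatorial: analysing the behaviour of $\mathcal{M}$ on long inputs in order to locate the repetitive prefixes of $\mathbf{a}$ required above. Because the stack is unbounded and $\mathcal{M}$ is deterministic, a pumping (``lasso'') analysis produces a reachable configuration together with a loop that strictly increases the stack without ever popping below its starting level: reading a suitable word $v$ sends a surface configuration $(q,X)$ to $(q, YX)$ with $Y$ non-empty. Feeding inputs of the shape (a fixed prefix reaching the loop)$\,\cdot\, v^t \,\cdot\,$(a tail) and letting $t$ grow, the portion of the stack inspected while reading a tail of bounded length becomes periodic, so that the output no longer depends on $t$; ranging over all tails of a given length $\ell$ one then recovers, at controlled positions inside $\mathbf{a}$, arbitrarily long blocks that are forced to coincide. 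The delicate point — and the step I expect to be the main obstacle — is to arrange these coincidences into the precise prefix shape $U_nV_n^{1+\varepsilon}$ demanded by the criterion, with $|U_n| = O(|V_n|)$: a naive use of pumping only produces a fixed block reappearing at geometrically spaced positions, which is useless as such. One must instead either exploit the determinism of $\mathcal{M}$ to force a coincidence between the configurations reached by two inputs that are consecutive at the appropriate scale — yielding genuinely adjacent equal blocks of length $\asymp k^m$ preceded by a head of length $O(k^m)$ — or, failing such a coincidence, show that $\mathbf{a}$ then falls back to the eventually periodic or to the automatic case. Keeping the ratio $|U_n|/|V_n|$ bounded amounts to balancing the length of the pumped loop against the stack depth it reaches, and loops that raise the stack by a single symbol may require a separate treatment.

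Once the stammering property is established, the transcendence criterion of \cite{AB07,ABL} applies verbatim and contradicts the algebraicity of $\xi$, which proves Theorem \ref{thm:1bis}. The same overall scheme, transported to the enumerator side by Cobham's equivalence and by the material of Section \ref{sec:ja}, is what underlies Theorem \ref{thm:2bis}.
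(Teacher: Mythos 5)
Your overall frame is the right one: reduce to Proposition ABL by exhibiting two distinct integers $n\neq n'$ whose reached configurations are indistinguishable under every continuation, since then $a_{k^{\ell}n+i}=a_{k^{\ell}n'+i}$ for all $i<k^{\ell}$ yields prefixes $U_{\ell}V_{\ell}^{1+1/(n'-n)}$ and $\Dio({\bf a})\geq 1+1/(n'-1)>1$. Your bounded-stack case is also essentially the paper's case (i) (though the paper only needs \emph{infinitely many} inputs of bounded stack height, not all of them, and handles this by pigeonhole on configurations rather than by quoting Theorem AB). But the heart of the matter is the unbounded-stack case, and there your proposal has a genuine gap that you yourself flag: the pumping/lasso construction produces coincidences of outputs along inputs $uv^tw$, i.e.\ at positions $[uv^tw]_k$ that are geometrically spaced in ${\bf a}$, and you correctly observe this is useless for the criterion. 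You then say one must ``exploit determinism to force a coincidence between configurations reached by two inputs'' --- which is exactly the goal --- but you give no mechanism for producing such a pair when the stack height tends to infinity, and the alternative fallback (``show ${\bf a}$ is eventually periodic or automatic'') is not substantiated either.

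The missing idea is the following. Assume every level set $\mathcal{H}_m:=\{w\in\mathcal{R}_k \mid H(w)\leq m\}$ is finite (otherwise pigeonhole on the finitely many configurations in $Q\times\Gamma^{\leq m}$ already gives $C(n)=C(n')$). For each $m$ pick $v_m\in\mathcal{H}_m$ of \emph{maximal length}; the set $\{v_m\}$ is infinite since $\mathcal{R}_k=\bigcup_m\mathcal{H}_m$. Maximality forces $H(v_mw)>H(v_m)$ for every nonempty continuation $w$ (using that $\epsilon$-moves may be taken decreasing), so the stack never again dips to the level reached after $v_m$: everything below the topmost symbol $z_m$ of $S(v_m)$ is never read or modified afterwards. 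Hence $(q_{v_m},S(v_m))\sim(q_{v_m},z_m)$, and the latter lives in the \emph{finite} set $Q\times\Gamma$. Pigeonhole over the infinitely many $v_m$ then produces $v_m\neq v_{m'}$ with $C(v_m)\sim C(v_{m'})$, and the criterion applies with $n=[v_m]_k$, $n'=[v_{m'}]_k$. No pumping of a stack-increasing loop is needed, and no balancing of loop length against stack depth arises; the argument is a pure finiteness/pigeonhole statement about ``surface'' configurations. Without this (or an equivalent device), your proof does not close.
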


In Section  \ref{subsec:mult}, we use Theorem \ref{thm:1bis} to revisit the Hartmanis--Stearns problem as follows:  
instead of some time constraint,  we put some restriction based 
on the way the memory may be stored by Turing machines. 
This leads us to consider a classical computation model called \emph{multistack machine}. 
It corresponds to a version of the deterministic Turing machine where the memory is simply organized by stacks. 
It is as general as the Turing machine if one allows two or more stacks. 
Furthermore the one-stack machine turns out to be equivalent to the deterministic pushdown automaton, while a zero-stack machine is 
just the finite automaton of Theorem AB, that is a machine with a strictly finite memory only stored in the finite state control.     
Incidentally, Theorems AB and \ref{thm:1bis} turn out to completely solve the Hartmanis--Stearns problem for  multistack machines. 
Our approach  also provides a method to prove the transcendence of some real numbers generated by linearly bounded Turing 
machines (see the example in Section \ref{sec:beyond}).

\medskip

This paper is organized as follows. 
 Definitions related to finite automata,  morphisms, and pushdown automata are given in Section \ref{sec:2bis}.  
The useful combinatorial transcendence criterion of \cite{ABL}, on which our results are based, is recalled in Section \ref{sec:ABL}.   
Section \ref{sec:proof2bis} is devoted to the proofs of our main results: Theorems \ref{thm:2bis} and \ref{thm:1bis}. 
In connection with these results, two models of computation are discussed in Section \ref{sec:ts}: the tag machine and the multistack machine. 
Finally, Section \ref{sec:end} is devoted to concluding remarks regarding factor complexity, some quantitative aspects of this method, 
and continued fractions.

%%%%%%%%%%%%%%%%%%%%%%%%%%%%%%%%%%%%
\section{Finite automata, morphic sequences and pushdown automata}\label{sec:2bis}

In this section, we give definitions for a real number to be generated by a 
finite automaton, a morphism, and a deterministic pushdown automaton. 
This provides a precise meaning to Theorems AB, \ref{thm:2bis}, and \ref{thm:1bis}. 

\medskip

All along this paper, we will use the following notations. 
An alphabet $A$ is a finite set of symbols, also called letters. A finite word over $A$ is a 
finite sequence of letters in $A$  or equivalently an element of $A^*$, the free monoid  generated by $A$.  
The length of a finite word $W$, that is the number 
of symbols composing $W$, is denoted by $\vert W\vert$. 
We will denote by $\epsilon$ the empty word, that is the unique word of length $0$, and $A^+$ the set of finite words of positive length over $A$. 
If $a$ is a letter and $W$ a finite word,  
then $\vert W\vert_a$ stands for the number of occurrences of the letter $a$ in $W$. 
Let $k\ge 2$ be a natural number.  
We let $\Sigma_k$ denote the alphabet  $\left\{0,1,\ldots,k-1\right\}$. Given a positive integer $n$, we set 
$\langle n\rangle_k :=w_r w_{r-1}\cdots w_1 w_0$ for the base-$k$ expansion of $n$, which means that $n=\sum_{i=0}^r w_i k^{i}$ 
with $w_i\in \Sigma_k$ and $w_r\not= 0$.  Note that by convention $\langle 0\rangle_k:= \epsilon$. 
Conversely, if $w := w_1 \cdots w_r$  is a finite word over the alphabet $\Sigma_k$, we set $[w]_k :=\sum_{i=0}^r w_{r-i} k^{i}$.  
The usual notations $\{x\}$, $\lfloor x \rfloor$, and 
$\lceil x\rceil$  respectively stand for the fractional part, the floor, and the ceil  of the real number $x$.

%%%%%%%%%%%%%%%%%%%%%%%%%%%%%%%
\subsection{Finite automata and automatic sequences} 

A sequence ${\bf a}:= (a_n)_{n\geq 0}$ with values in a finite set 
is \emph{$k$-automatic} if it can be generated by a finite automaton used as a transducer. 
This means that 
there exists a finite-state machine 
(a deterministic finite automaton with output) that takes as input the base-$k$ expansion of 
$n$ and produces as output the symbol $a_n$.  

Let us give now a formal definition of a $k$-automatic sequence. 
Let $k\geq 2$ be a natural number. A $k$-automaton is defined as a $6$-tuple 
$\displaystyle{\mathcal A} = \left(Q,\Sigma_k,\delta,q_0,\Delta,\tau\right) ,
$
where:
\begin{itemize}
\item  $Q$ is a finite set of states, 
\item $\delta:Q\times \Sigma_k\rightarrow Q$ is the transition function,
\item $q_0$ is 
the initial state,
\item $\Delta$ is the output alphabet,
\item and $\tau : Q\rightarrow 
\Delta$ is the output function.
\end{itemize}
 Given a state $q$ in $Q$ and a finite 
word $w=w_1 w_2 \cdots w_n$ on the alphabet $\Sigma_k$, 
we define $\delta(q,w)$ recursively by 
$\delta(q,w)=\delta(\delta(q,w_1w_2\cdots w_{n-1}),w_n)$.  

\begin{defn}
Let $\displaystyle{\mathcal A} = \left(Q,\Sigma_k,\delta,q_0,\Delta,\tau\right)$ be a $k$-automaton. 
The output sequence produced by $\mathcal A$ is the sequence $(\tau(\delta(q_0,\langle n\rangle_k)))_{n\geq 1}$. 
Such a sequence is called a {\it $k$-automatic sequence}. A sequence or an infinite word is said to be automatic if it is $k$-automatic for some integer $k\geq 2$. 
\end{defn}

\begin{exam} By a classical result of Lagrange, it is known that every non-negative integer can be written as the sum of four perfect squares.  
It is optimal in the sense that some natural numbers cannot be written as the sum of only three square numbers.  More precisely, Legendre proved that  
$$
\exists\, a,b,c \mid n=a^2+b^2+c^2 \iff \not\exists\, i,j \mid n = 4^{i}(8j+7)\,,
$$ 
where $n,a,b,c,i,j$ are nonnegative integers. 
As a consequence, the binary sequence ${\bf s}:=(s_n)_{n\geq 0}$ defined by $s_n=1$ if $n$ can be written as 
the sum of three squares and $s_n=0$ otherwise is $2$-automatic.  
\begin{figure}[h]
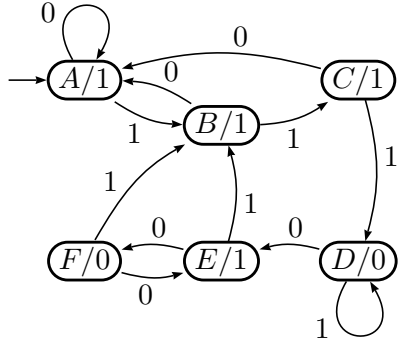

\centering
\VCDraw{%
        \begin{VCPicture}{(0,2)(6,-3)}
% states
 \StateVar[A/1]{(0,0)}{a}  \StateVar[B/1]{(3,-1)}{b} \StateVar[C/1]{(6,0)}{c}
  \StateVar[F/0]{(0,-4)}{f}  \StateVar[E/1]{(3,-4)}{e} \StateVar[D/0]{(6,-4)}{d}
% initial--final
\Initial[w]{a}
% transitions 
\LoopN{a}{0}
\LoopS{d}{1}
\ArcR{a}{b}{1}
\ArcR{b}{a}{0}

\ArcR{e}{b}{1}
\ArcR{e}{f}{0}
\ArcR{f}{e}{0}
\ArcL{f}{b}{1}
\ArcR{c}{a}{0}
\ArcL{c}{d}{1} 
\ArcR{b}{c}{1}
\ArcR{d}{e}{0}

\end{VCPicture}
}\caption{A $2$-automaton generating the sequence ${\bf s}$.}
  \label{figure0}
\end{figure}
\end{exam}

 \begin{defn}
A real number $\xi$ can be generated by a deterministic $k$-automaton $\mathcal A$ if, 
for some integer $b\geq 2$, one has $\langle\{\xi\}\rangle_b=0.a_1a_2\cdots$, where $(a_n)_{n\geq 1}$ 
corresponds to the output sequence produced by $\mathcal A$.
\end{defn}

Theorem AB thus implies that the binary number 
$$
\xi_0:= 1. 111 \, 111\, 011\, 111\, 110\, 111\, 111\, 111\, 111\, 011\, 011\, 111\, 110\cdots
$$
generated by the $2$-automaton of Figure \ref{figure0} is transcendental. 
%%%%%%%%%%%%%%%%%%%%%%%%%%%%%%%%%%
\subsection{Morphic sequences}\label{sec: morphism}

As already mentioned in the introduction, Cobham suggested in 1968 to restrict Problem HS to a special 
class of real-time Turing machines called tag machines. Though these machines have an historical interest, this terminology 
is not much used today by the computer science community. However, their outputs precisely correspond to class of 
morphic sequences that we define below. In contrast, the latter are well-known objects of interest for both mathematicians and computer scientists. 
For the interested reader, we will still describe tag machines in Section \ref{sec: tag}.

\medskip

We recall here some basic definitions. 
Let $A$ be a finite  alphabet. 
A map from $A$ to $A^*$ naturally extends to a map from $A^*$ into itself called (endo)\emph{morphism}.  
Given two alphabets $A$ and $B$, a map from $A$ to $B$ naturally extends to a map from $A^*$ into $B^*$ 
called a \emph{coding} or \emph{letter-to-letter morphism}.  
A morphism $\sigma$ over $A$ is said to be $k$-\emph{uniform} if 
$\vert \sigma(a)\vert =k$ for every letter $a$ in $A$, and just 
\emph{uniform} if it is $k$-uniform for some $k$.  
A useful object associated with a morphism $\sigma$ is the so-called {\it incidence matrix} of $\sigma$,  
denoted by $M_{\sigma}$.  We first need to choose an ordering of the elements of $A$, say 
$A=\{a_1,a_2,\ldots,a_d\}$, and then $M_{\sigma}$ 
is defined by 
$$
\forall i,j\in \{1,\ldots,d\},\;\; \left(M_{\sigma}\right)_{i,j} := \vert \sigma(a_j)\vert_{a_i} \, .
$$ 
The choice of the ordering has no importance. 
A morphism $\sigma$ over $A$ is said to be \emph{prolongable} 
on $a$ if $\sigma(a)=aW$ for some word $W$ and if the length of the word 
$\sigma^n(a)$ tends to infinity with $n$. Then the word 
$$
\sigma^{\omega}(a) := \lim_{n\to\infty} \sigma^n(a) = aW\sigma(W)\sigma^{2}(W)\cdots
$$
is the unique fixed point of $\sigma$ that begins with $a$. 
An infinite word obtained by iterating a prolongeable morphism $\sigma$ is said to be {\it purely morphic}. 
The image of a purely morphic word under a coding is a {\it morphic word}. 
Thus to define a morphic word ${\bf a}$ one needs a $5$-tuple ${\mathcal T} = (A,\sigma,a,B,\varphi)$ 
such that ${\bf a} = \varphi(\sigma^{\omega}(a))$, where:

\begin{itemize}

\medskip

  \item
 $A$ is a  finite set of symbols called the \emph{internal alphabet}. 
 
  \medskip
 
  \item 
$a$ is a an element of $A$ called the \emph{starting symbol}.
 
 \medskip
 
  \item 
$\sigma$ is a \emph{morphism} of $A^*$ prolongable on $a$.

\medskip

  \item 
 $B$ is a  finite set of symbols called the \emph{external alphabet}. 
 
  \medskip
 
  \item 
$\varphi$ is a letter-to-letter morphism from $A$ to $B$.

 \end{itemize}
 When $\sigma$ is uniform (resp.\ $k$-uniform, ), the sequence ${\bf a}$ is said to be generated by a uniform (resp.\ $k$-uniform) morphism. 

\begin{defn} A morphism $\sigma$ is said to have \emph{exponential growth} 
if the spectral radius of the matrix $M_{\sigma}$ is larger than one. 
When $\sigma$ has exponential growth and all letters of $A$ appear in $\sigma^{\omega}(a)$, the sequence ${\bf a}= \varphi(\sigma^{\omega}(a))$ 
is said to be generated by a 
morphism with exponential growth. 
\end{defn}

\begin{defn} A real number $\xi$ can be generated by a morphism with exponential growth if, 
for some integer $b\geq 2$, one has $\langle\{\xi\}\rangle_b=0.a_1a_2\cdots$, where ${\bf a}:=(a_n)_{n\geq 1}$ 
can be generated by a morphism with exponential growth. 
\end{defn}

Theorem \ref{thm:2bis} thus implies the transcendence of  the ternary number 
$$\xi_1 := 0.021\, 201\, 220\,210\, 122\, 202\, 120\, 120\, 210\, 122\, 220\, 212\,122\cdots$$
whose expansion is the sequence ${\bf a} = \varphi_1(\sigma_1^{\omega}(a))$, where $\sigma_1(a)= acb$, $\sigma_1(b)=abc$, $\sigma_1(c)=c$,  
$\varphi_1(a)=0$, $\varphi_1(b)=1$, and $\varphi_1(c)=2$. One can check that $\sigma_1$ has exponential growth for the spectral radius of $M_{\sigma_1}=2$. 
In contrast, Theorem \ref{thm:2bis} does not apply to prove the transcendence of binary number $\displaystyle\sum_{n=1}^{\infty}\frac{1}{2^{n^2}}$. 
Indeed, though this number can be generated by a morphism, the latter has non-exponential growth. Indeed the characteristic sequence of squares   
can be obtained as $\varphi_2(\sigma_2^{\omega}(a))$ where $\sigma_2(a)= ab$, $\sigma_2(b)=ccb$, $\sigma_2(c)=c$,  
$\varphi_2(a)=\varphi_2(c)=0$, and $\varphi_2(b)=1$. One can check easily that the spectral radius of $M_{\sigma_2}=1$.

\begin{rem}
Following Cobham \cite{Co68}, there is no loss of generality to assume than the internal morphism 
$\sigma$ is a non-erasing morphism, which means that no letter is mapped to the empty word. Indeed, if an infinite word can be generated by  
an erasing morphism , then there also 
exists a non-erasing morphism that can generate it. From now on, we will 
thus only consider non-erasing morphisms. 
\end{rem}

\medskip

It is worth mentioning that the class of sequences generated by uniform morphisms is 
especially relevant  because of the following result of Cobham \cite{Co72}. 

\medskip

\noindent {\bf\itshape Proposition {\rm\bf C}.} --- 
\emph{ A sequence is $k$-automatic if and only if 
it can be generated by some $k$-uniform morphism. 
}

\medskip
 Furthermore the proof of Proposition C is completely constructive and provides a simple way to go from   
$k$-uniform morphisms to $k$-automata and {\it vice versa}. This general feature is examplified below. 
For a complete treatment see \cite{Co72} or Chapter 6 of \cite{AS}. 

\begin{exam}
The Thue--Morse sequence\index{Thue--Morse sequence} ${\bf t}:=(t_n)_{n\geq 0}$ 
is probably the most famous example among automatic sequences. It is defined as follows: 
$t_n=0$ if the sum of the binary digits of $n$ is even, and $t_n=1$ otherwise.  
It can be generated by the following 
finite $2$-automaton:    
$
{\mathcal A}=\left ( \{q_0, q_1\}, \{0, 1\}, \delta, q_0, \{0, 1\}, \tau \right)$, 
where
$
\delta(q_0, 0) = \delta (q_1, 1) = q_0$, $\delta(q_0, 1) = \delta (q_1, 0) = q_1$,  
 $\tau (q_0) = 0$  and  $\tau (q_1) = 1$. 
\begin{figure}[htbp]
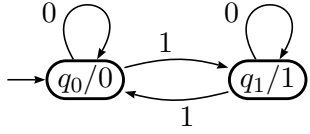

\centering
\VCDraw{%
        \begin{VCPicture}{(0,-1)(4,2)}
% states
 \StateVar[q_0/0]{(0,0)}{a}  \StateVar[q_1/1]{(4,0)}{b}
% initial--final
\Initial[w]{a}
% transitions 
\LoopN{a}{0}
\LoopN{b}{0}
\ArcL{a}{b}{1}
\ArcL{b}{a}{1}
\end{VCPicture}
}
\caption{A $2$-automaton generating Thue--Morse sequence.}
  \label{AB:figure:thue}
\end{figure}
The Thue--Morse sequence can be as well generated by a $2$-uniform morphism for one has  
${\bf t}= \varphi(\sigma^{\omega}(q_0))$, where 
$\sigma(q_0)=q_0q_1$, $\sigma(q_1)=q_1q_0$,  $\varphi(q_0)=0$, $\varphi(q_1)=1$. 
\end{exam}

%%%%%%%%%%%%%%%%%%%%%%%%%%%%%%%%
\subsection{Pushdown automata}\label{sec:PDA} 

A pushdown automaton is a classical device, but most often used in formal language theory as an acceptor, 
that is a machine that can accept or reject finite words and langages, namely context-free languages (see for instance \cite{Au,ABB,HMU}).    
Our point of view here is slightly different  
for we will use the pushdown automaton as a transducer, that is a machine that associates a symbol 
with every finite word on a given input alphabet.  

Formally, a $k$-{pushdown automaton} is a  complete deterministic pushdown automaton with output, or DPAO for short.  
It is defined as a 7-tuple $\M =(Q,\Sigma_k,\Gamma,\delta,q_0,\Delta,\tau)$ where: 
\begin{itemize}

\medskip

  \item
 $Q$ is a  finite set of \emph{states},
 
  \medskip
 
  \item 
$\Sigma_k:=\{0,1,\ldots,k-1\}$ is the finite set of \emph{input symbols}, 
 \medskip
 
  \item 
$\Gamma$ is the finite set of \emph{stack symbols}. A special symbol $\#$ is used to mark the bottom of the stack.
\\  
For convenience, we identify $\#$ with the empty word of $\Gamma^*$.  

\medskip

  \item 
$\delta: E\subset  Q\times \left(\Gamma\cup\{\#\}\right)  \times \left(\Sigma_k\cup\{\varepsilon\}\right) \to Q\times \Gamma^* $ 
 is the \emph{transition  function},
 
  \medskip
 
  \item 
$q_0\in Q$ is the {\it initial state} and $(q_0,\#)$ is the {\it initial (internal) configuration}, 

\medskip
 
  \item 
$\Delta$ is the finite set of {\it output symbols},

 \medskip
 
  \item 
$\tau:Q\times \Gamma\cup\{\#\} \to \Delta$ is the {\it output function}. 
 \end{itemize}

\medskip
 
 Furthermore, the transition function satisfies the following conditions.
 
 \medskip
 
  \begin{itemize}
  \item  {\it Determinism assumption}:
 if $(q,a,\epsilon)$ belongs to $E$ for some $(q,a)\in Q\times \left(\Gamma\cup\{\#\}\right)$, then for every $i\in\Sigma_k$, 
 $(q,a,i)\not\in E$.
 
 \medskip
 
  \item {\it Completeness assumption}:  If $(q,a,\epsilon)$ does not belong to $E$ for some $(q,a)\in Q\times \left(\Gamma\cup\{\#\}\right)$, 
  then $ \{q\} \times \{a\} \times \Sigma_k \subset E$.

\end{itemize}

\begin{rem}\label{rem:det}
{\rm  Notice that $\delta$ being a function is also a part of the determinism assumption. 
In a nondeterministic $k$-pushdown automata, $\delta$ would be only define as a subset of 
$Q\times  \left(\Gamma\cup\{\#\}\right) \times \left(\Sigma_k\cup\{\epsilon\}\right)\times Q\times \Gamma^* $.}
\end{rem}

We want now to make sense to the computation $\tau(\delta(q_0,\#,W))$ 
 for any input word $W$ in $\Sigma_k^*$.   
First the transition function $\delta$ of a $k$-pushdown automaton  
can naturally be extended to  a subset 
of $Q\times \Gamma^*\times \left(\Sigma_k\cup\{\epsilon\}\right)$ by 
setting
$$
\forall S=s_1\cdots s_j \in \Gamma^*, \vert S\vert \geq 2 \,,\;\; \delta(q,S,a) = (q',s_1\cdots s_{j-1}X)\, ,
$$
when $\delta(q,s_j,a)=(q',X)$.
 After reading the symbol $a$, the pushdown automaton could have reached a configuration $(q,S)$ from which 
$\epsilon$-moves are possible. In such a case, one asks the pushdown automaton to perform all possible $\epsilon$-moves 
before reading the next input symbol.  We stress that this appears to be a classical convention (see the discussion in \cite{ABB})  
which can be formalized as follows. 
We define  the function $\overline{\delta}$ by: 
 $$
 \overline{\delta}(q,S,a)= \left\{
   \begin{array}{cl}
   \delta(q,S,a) & \mbox{if }(\delta(q,S,a),\epsilon)\notin E  \\
   \overline{\delta}(\delta(q,S,a),\epsilon) & \mbox{if }(\delta(q,S,a),\epsilon)\in E   
   \end{array}
 \right.$$
Then  $\overline \delta$ can be extended to a subset of $Q\times \Gamma ^*\times \Sigma_k^*$ 
by setting 
 $$
 \overline{\delta}(q,S,w_1\cdots w_r)=\overline{\delta}\left(\overline{\delta}\left(q,S,w_1\cdots w_{r-1}\right),w_r  \right)\, .
 $$
 This means in particular that $\mathcal M$ scans its inputs from left to right. From now on, we will not distinguish $\delta$ from its extension 
 $\overline{\delta}$. 
 We also extend the output function $\tau$ to a subset of $Q\times \Gamma^*$ by simply setting 
 $\tau(q,s_1s_2\cdots s_j)=\tau(q,s_j)$.

 \begin{defn} 
 Let  $\mathcal M =(Q,\Sigma_k, \Gamma,\delta,q_0,\Delta,\tau)$ be a $k$-pushdown automaton.   
 The sequence $(\tau(\delta(q_0,\#,\langle n\rangle_k)))_{n\geq 1}$ is called the output sequence produced by
 $\mathcal M$. 
 \end{defn}
 
This class of sequences are discussed in \cite{CLG}. They form a subclass of context-free sequences (see~\cite{LG12,Mo08}).

 \begin{exam}\label{ex:pushdown} Usually a deterministic pushdown automaton is represented as a finite graph whose vertices are labelled by the elements of $Q$ and whose edges 
 are labelled by transitions as follows: $\delta(q,S,a)=q'W$ is represented by the edge $q\xrightarrow[]{(a,S|W)} q'$. An example of such internal representation is given 
 by the $2$-pushdown automaton $\mathcal A$ in Figure \ref{fig:AP}.  
 It outputs the binary sequence  $${\bf a}:= 1110111001101000011111101110100000010110\cdots$$ 
whose $n$-th binary digit   
is $1$ if the difference between the number of occurrences of the digits $0$ and $1$ in the binary expansion of $n$ is at 
most $1$, and is $0$ otherwise. 

 \begin{figure}[htbp]
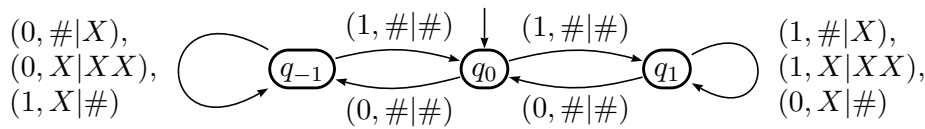

\centering
\VCDraw{%

 \begin{VCPicture}{(-5,-1)(5,1.5)}
% states
% \StateVar[A]{(0,2)}{s} 
 \StateVar[q_1]{(4,0)}{p} \StateVar[q_0]{(0,0)}{q} \StateVar[q_{-1}]{(-4,0)}{r}
% initial--final
\Initial[n]{q}
% transitions 

%\EdgeR{s}{p}{(1,\#|\#)}
%\LoopN{s}{(0,\#|\#)}
\LoopE[.5]{p}{
  \begin{array}{l}
  (1,\#|X),\\(1,X|XX),\\(0,X|\#)  
  \end{array}
}

\ArcL[.5]{p}{q}{(0,\#|\#)}
\ArcL[.5]{q}{p}{(1,\#|\#)}
\ArcL[.5]{r}{q}{(1,\#|\#)}
\ArcL[.5]{q}{r}{(0,\#|\#)}

\LoopW[.5]{r}{ \begin{array}{l}(0,\#|X),\\(0,X|XX),\\(1,X|\#) \end{array}}
\end{VCPicture}
}
  \caption{A $2$-pushdown automaton producing the binary expansion of $\xi_2$}
   \label{fig:AP}

\end{figure}
This automaton works as follows. Being on state $q_0$ means that the part of the input word that has been already read contains as many $1$'s as $0$'s. 
On the other hand, being on state $q_1$ means that the part of the input word that has been already read contains more $1$'s than $0$'s, while 
being on state $q_{-1}$ means that it contains more $0$'s than $1$'s. Furthermore, in any of these two states, the difference between the number of $0$'s and $1$'s (in absolute value) 
is one more than the number of $X$'s in the stack. Thus, the difference between the number of occurrences of the symbols $1$ and $0$ in the input word is at most $1$ if, and only if, 
the reading ends with an empty stack (regardless to the ending state). By definition of the output function, we see that $\mathcal A$ well produces the infinite word ${\bf a}$. 

Note that, formally, this pushdown automaton should be defined as:  $\mathcal A:=(\{q_0,q_1,q_{-1}\}, \Sigma_2,\{X\},\delta,q_0, \{0,1\}, \tau)$, 
where  the transition function $\delta$ is defined by 
$\delta(q_0,\#,1)=(q_1,\#)$, $\delta(q_0,\#,0)=(q_{-1},\#)$, $\delta(q_1,\#,0)=(q_0,\#)$, $\delta(q_1,\#,1)=(q_1,X)$, 
$\delta(q_1,X,0)=(q_1,\#)$, $\delta(q_1,X,1)=(q_1,XX)$, 
 $\delta(q_{-1},\#,0)=(q_{-1},X)$, $\delta(q_{-1},\#,1)=(q_0,\#)$, $\delta(q_{-1},X,0)=(q_{-1},XX)$, $\delta(q_{-1},X,1)=(q_{-1},\#)$, 
and where the output function $\tau$ is defined by 
$\tau(q_0,\#)=\tau(q_1,\#)=\tau(q_{-1},\#)=1$, and $\tau(q_0,X)=\tau(q_1,X)=\tau(q_{-1},X)=0$.   
\end{exam}
 
 \begin{defn}
A real number $\xi$ can be generated by a deterministic $k$-pushdown automaton  $\mathcal M$ if, 
for some integer $b\geq 2$, one has $\langle\{\xi\}\rangle_b=0.a_1a_2\cdots$, where $(a_n)_{n\geq 1}$ 
corresponds to the output sequence produced by $\mathcal M$.
\end{defn}

Theorem \ref{thm:1bis} thus implies the transcendence of  the binary number   
$$\xi_2 := 1.110\, 111\, 001\, 101\, 000\, 011\, 111\, 101\, 110\, 100\, 000\, 010\, 110\cdots$$ 
whose binary expansion is the infinite word ${\bf a}$ of Example \ref{ex:pushdown}.  
 
 \begin{rem}\label{subsec:eps}
 
\emph{About $\epsilon$-moves.}--- 
Since we only consider deterministic pushdown automata,  
 we can assume without loss of generality that all $\epsilon$-moves are decreasing (see for instance \cite{ABB}). 
 This means that a computation of the form $\delta(q,W,\epsilon)=(q',W')$, always implies that $\vert W'\vert < \vert W\vert$.  
 
 \medskip
 
\emph{About input words}---  
In our model of $k$-pushdown automaton, we choose to feed our machines only with the 
proper base-$k$ expansion of each nonnegative integer $n$. 
Instead, we could as well  imagine to ask that $\tau(\delta(q_0,\#,w))$ remains the same for all words $w\in\Sigma_k^*$ 
such that $[w]_k=n$, that is $\tau(\delta(q_0,\#,w))=\tau(\delta(q_0,\#,0^jw))$ for every natural number $j$.  
Such a change would not affect the class of output sequences produced by $k$-pushdown automata.  
The discussion is similar to the case of the $k$-automaton and we refer to \cite{AS} for more details. 

 Our second remark concerning inputs is more important. In our model, the $k$-pushdown automaton scans the 
 base-$k$ expansion of a positive integer $n$ starting from the most significant digit. This corresponds to the usual way humans 
 read numbers, that is from left to right. In the case of the $k$-automaton, this choice is of no consequence because both ways  
 of reading are known to be equivalent.  However, this is no longer true for $k$-pushdown machines as the class of deterministic context free languages is not closed under mirror 
 image.  

\medskip
 
\emph{About uniqueness}--- There always exist several different $k$-pushdown automata producing the same output. 
   In particular, it is  possible to choose one with a single state (see for instance \cite{Au}). 
   The $2$-automaton given in Figure \ref{fig:AP} is certainly not the smallest one with respect to 
   the number of states, but it  makes the process of computation more transparent and it only uses one ordinary stack symbol.  
\end{rem}

%%%%%%%%%%%%%%%%%%%%%%%%%%%%%%%%%%%%
\section{A combinatorial transcendence criterion}\label{sec:ABL}
In this section, we recall the fundamental relation between Diophantine approximation and repetitive patterns occurring in integer base expansions of real numbers.   

\medskip

Let $A$ be an alphabet and $W$ be a finite word over $A$. 
For any positive integer $k$, we write
$W^k$ for the word 
$$\underbrace{W\cdots W}_{\mbox{$k$ times}}$$ 
(the concatenation
of the word $W$ repeated $k$ times). 
More generally, for any positive real number
$x$,  $W^x$ denotes the word
$W^{\lfloor x \rfloor}W'$, where $W'$ is the prefix of
$W$ of length $\left\lceil \{x\}\vert W\vert\right\rceil$. 
The following natural measure of periodicity for infinite words was  
introduced in \cite{AB07a} (see also \cite{Ad,AC06}).

\begin{defn}\label{def:dio}{\rm The {\it Diophantine exponent} 
of an infinite word ${\bf a}$ is defined as 
the supremum of the real numbers $\rho$ for which there exist arbitrarily long prefixes of ${\bf a}$ 
that can be factorized as $UV^{\alpha}$, where $U$ and $V$ are two finite words 
($U$ possibly empty) and $\alpha$ is a real number such that  
$$
\frac{\vert UV^{\alpha}\vert}{\vert UV \vert} \geq \rho.
$$
The  Diophantine exponent of ${\bf a}$ 
is denoted by 
${\Dio}({\bf a})$.}
\end{defn}

 Of course, for any infinite word ${\bf a}$ one has the following relation
\begin{equation*}
1 \leq  \Dio({\bf a}) \leq +\infty.
\end{equation*}
Furthermore, $\Dio({\bf a}) = +\infty$ for an eventually periodic word ${\bf a}$, but the converse is not true. 
There is some interesting interplay between the Diophantine exponent and 
Diophantine approximation, which is actually reponsible for the name of the exponent.   
Let $\xi$ be a real number whose base-$b$ expansion is 
$0.a_1a_2\cdots$. Set ${\bf a} := a_1a_2\cdots$.  
Let us assume that the word ${\bf a}$ begins with a prefix of the form $UV^{\alpha}$.
Set $q = b^{\vert U\vert}(b^{\vert V\vert}-1)$. 
A simple computation shows that there exists an integer $p$ such that 
$$
\langle p/q\rangle_b = 0.UVVV\cdots.
$$  
Since $\xi$ and $p/q$ have the same first $\vert UV^{\alpha} \vert$ digits in their base-$b$ expansion,  
we obtain that 
$$
\left\vert \xi - \frac{p}{q} \right\vert < \frac{1}{b^{\vert UV^{\alpha}\vert}}
$$
and thus 
\begin{equation}\label{min}
\left\vert \xi - \frac{p}{q} \right\vert < \frac{1}{q^{\rho}},
\end{equation}
where $\rho = \vert UV^{\alpha}\vert/\vert UV\vert$. 

We do not claim here that $p/q$ is written in lowest terms.  Actually, it may well happen that the 
$\gcd$ of $p$ and $q$ is quite large but (\ref{min}) still holds in that case. 
By Definition~\ref{def:dio}, it follows that if $\Dio({\bf a})= \mu$, then for every $\rho < \mu$, there exists infinitely 
many rational numbers $p/q$ such that 
\begin{equation*}
\left\vert \xi - \frac{p}{q} \right\vert < \frac{1}{q^{\rho}} \cdot
\end{equation*} 
Note that when $\Dio({\bf a}) < 2$, such approximations look quite bad, for the existence of much better ones is ensured by 
the theory of continued fractions or by Dirichlet pigeonhole principle.  Quite surprisingly, the inequality $\Dio({\bf a}) > 1$ 
is already enough to conclude that $\xi$ is either rational or transcendental. This powerful combinatorial transcendence criterion,  
proved in \cite{ABL} and restated in Proposition ABL, emphasizes the relevance of the Diophantine exponent for our purpose. 

\medskip

\noindent {\bf\itshape Proposition {\rm\bf ABL}.} --- 
\emph{ Let $\xi$ be a real number with $\langle \{\xi\}\rangle_b :=  0.a_1a_2\cdots$. 
 Let us assume that $\Dio({\bf a})>1$ where ${\bf a}:= a_1a_2\cdots$. 
 Then $\xi$ is either rational or transcendental. 
}

\medskip

Proposition ABL is obtained as a consequence of the $p$-adic Subspace Theorem. 
It is the key tool for proving Theorem AB and it 
will be the key tool for proving Theorems \ref{thm:2bis} and \ref{thm:1bis} as well. In this section, we recall the fundamental relation between Diophantine approximation and repetitive patterns occurring in integer base expansions of real numbers.

%%%%%%%%%%%%%%%%%%%%%%%%%%%%%%%%%%%%%%%%%%
\section{Proof of Theorems~\ref{thm:2bis} and ~\ref{thm:1bis}}\label{sec:proof2bis}

In this section, we prove our two main results.

\subsection{Proof of Theorem~\ref{thm:2bis}}\label{sec:ja}
In order to prove Theorem~\ref{thm:2bis}, we first need the following definition.

\begin{defn}
{\rm Let $A$ be a finite set and $\sigma$ be a morphism of $A^*$. A letter $b\in A$ 
is said to have {\it maximal growth} if there exists a real number $C$ such that 
$$
\vert \sigma^n(c) \vert < C\vert \sigma^n(b)\vert  \,,
$$
for every letter $c\in A$ and every positive integer $n$. }
\end{defn}

\begin{lem}\label{lem:max}
Let $A$ be a finite set and $a\in A$. Let $\sigma$ be a morphism of $A^*$ prolongable on $a$ and such that all letters of $A$ appear in 
$\sigma^{\omega}(a)$. 
Let $\theta$ denote the spectral radius of $M_{\sigma}$. 
Then the letter $a$ has maximal growth.  

Furthermore, there exist a nonnegative integer $k$, and two positive real numbers $c_1$ and $c_2$ 
such that 
\begin{equation}\label{eq:theta}
c_1n^k \theta^n  < \vert \sigma^n(a)\vert  < c_2 n^k \theta^n \,. 
\end{equation}
\end{lem}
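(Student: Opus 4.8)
The plan is to analyze the growth of $\vert\sigma^n(b)\vert$ for each letter $b\in A$ via the powers of the incidence matrix $M_\sigma$, and to combine the Perron--Frobenius theory with the fact that $\sigma$ is prolongable on $a$ and that every letter occurs in $\sigma^\omega(a)$. First I would record that $\vert\sigma^n(b)\vert$ equals the sum of the entries in the column of $M_\sigma^n$ indexed by $b$; writing $\mathbf{1}$ for the all-ones row vector and $e_b$ for the $b$-th standard column vector, this is $\mathbf{1}\,M_\sigma^n\,e_b$. So the question reduces to understanding the entrywise asymptotics of $M_\sigma^n$, a nonnegative integer matrix. The standard tool is the Jordan form together with the fact (Perron--Frobenius for nonnegative matrices) that the spectral radius $\theta$ is itself an eigenvalue, and that for a nonnegative matrix each entry of $M_\sigma^n$ is $O(n^{m}\theta^n)$ where $m+1$ is the size of the largest Jordan block attached to an eigenvalue of modulus $\theta$. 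This already gives the upper bound in \eqref{eq:theta} (for $a$, and in fact for every letter) with $k=m$, and more importantly the inequality $\vert\sigma^n(c)\vert = O(n^k\theta^n)$ uniformly in $c$.

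The crux is then the lower bound $\vert\sigma^n(a)\vert > c_1 n^k\theta^n$ for the \emph{specific} letter $a$ on which $\sigma$ is prolongable; this is what will make $a$ have maximal growth, since dividing the uniform upper bound by this lower bound yields the constant $C$. Here I would use the hypothesis crucially. Since $\sigma$ is prolongable on $a$, the word $\sigma^n(a)$ is a prefix of $\sigma^{n+1}(a)$ for all $n$, so $\vert\sigma^n(a)\vert$ is nondecreasing; moreover $\sigma^{n+1}(a)$ contains $\sigma^n(b)$ as a factor for every letter $b$ appearing in $\sigma(a)$, and iterating, for every letter $b$ appearing in some $\sigma^j(a)$ — which by the ``all letters appear'' hypothesis is every letter of $A$. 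Hence for a fixed $j$ with all letters of $A$ occurring in $\sigma^j(a)$ we get, for $n\ge j$,
\[
\vert\sigma^{n}(a)\vert \;\ge\; \max_{b\in A}\vert\sigma^{n-j}(b)\vert .
\]
Now I claim $\max_{b\in A}\vert\sigma^{m}(b)\vert \gg m^k\theta^m$: indeed $\sum_b \vert\sigma^m(b)\vert = \mathbf{1}\,M_\sigma^m\,\mathbf{1}^{\mathsf T}$, and since $\theta$ is an eigenvalue of $M_\sigma$ realized by a Jordan block of size $k+1$, and $M_\sigma$ has strictly positive associated quantities after enough steps (one must handle the possibility that $\theta$ has several blocks or that other eigenvalues share modulus $\theta$ — Perron--Frobenius on the relevant irreducible component controls this), this double sum grows like a constant times $m^k\theta^m$. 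Combining with the display and absorbing the shift by $j$ into the constant gives $\vert\sigma^n(a)\vert > c_1 n^k\theta^n$ for all large $n$, hence for all $n\ge 1$ after shrinking $c_1$.

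The step I expect to be the main obstacle is the precise Perron--Frobenius bookkeeping for the lower bound: $M_\sigma$ need not be irreducible, several Jordan blocks may be attached to eigenvalues of modulus $\theta$, and there may be eigenvalues of modulus $\theta$ other than $\theta$ itself, so one cannot simply say ``the entry grows like $n^k\theta^n$''. The clean way around this is to pass to the \emph{restriction} of the dynamics to the letters that actually generate the growth: consider the ``rich'' letters $b$ for which $\vert\sigma^n(b)\vert$ is of maximal order $n^k\theta^n$ (this set is nonempty because the total sum has this order), show this set is closed under $\sigma$ in the sense that $\sigma(b)$ contains such a letter, and argue that $a$ reaches such a letter. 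One must also check that the exponent $k$ appearing in the upper bound for every letter is the same $k$ that appears in the lower bound for $a$ — this follows once we know $a$ attains the maximal growth rate. I would be careful to phrase everything in terms of $\log\vert\sigma^n(b)\vert$ if a fully elementary argument is wanted, but the matrix/Jordan approach is the most transparent and is what I would write up, citing standard facts about nonnegative matrices (e.g.\ as in \cite{AS}, Chapter 8, or the theory of substitutive sequences).
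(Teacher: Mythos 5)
Your overall architecture is workable and your combinatorial ingredient is essentially the one the paper uses: since $\sigma$ is prolongable on $a$ and every letter occurs in $\sigma^{\omega}(a)$, every letter occurs in $\sigma^j(a)$ for a single fixed $j$, so $\sigma^{n}(a)=\sigma^{n-j}(\sigma^j(a))$ contains every $\sigma^{n-j}(b)$ as a factor. (The paper exploits the same observation in the opposite direction to get maximal growth of $a$ immediately and with no matrix asymptotics: $\vert\sigma^n(c)\vert\le\vert\sigma^{n+r}(a)\vert=\vert\sigma^r(\sigma^n(a))\vert\le\Vert M_{\sigma^r}\Vert_{\infty}\vert\sigma^n(a)\vert$.) Where the two arguments genuinely diverge is in how the two-sided estimate \eqref{eq:theta} is obtained: the paper first proves maximal growth as above, then cites the Salomaa--Soittola theorem (Theorem 4.7.15 in \cite{CaNi}) for $c_1n^k\beta^n<\vert\sigma^n(a)\vert<c_2n^k\beta^n$ with some $\beta\ge1$, and finally identifies $\beta=\theta$ by combining maximal growth with Gelfand's formula. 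You instead attempt to rederive the asymptotics directly from the Jordan form of $M_\sigma$.

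That is where your write-up has a genuine gap, namely the lower bound. The claim that the sum of all entries of $M_\sigma^m$ is at least $c\,m^k\theta^m$ for all $m$, with the \emph{same} $k$ as in the entrywise upper bound, is precisely the nontrivial content of the Salomaa--Soittola theorem, and the phrase ``Perron--Frobenius on the relevant irreducible component controls this'' does not establish it. Two sub-facts are needed and neither is supplied: (1) that the maximal Jordan block size among eigenvalues of modulus $\theta$ is attained at $\theta$ itself (a Rothblum-type fact about nonnegative matrices, provable via chains of basic classes in the Frobenius normal form, or by noting that an uncompensated oscillating peripheral term of order $n^k\theta^n$ would force a negative entry in $M_\sigma^n$) --- without it the exponent $k$ of your upper bound need not reappear in the lower bound; and (2) that the other peripheral eigenvalues cannot cancel the $\theta$-contribution below $c\,n^k\theta^n$ for infinitely many $n$ --- here the fix is to prove the bound along an arithmetic progression and use the monotonicity of $\vert\sigma^n(a)\vert$ coming from prolongability to fill in the remaining $n$, a step you should make explicit. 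Your ``rich letters'' detour is moreover circular as stated: the set is declared nonempty ``because the total sum has this order'', which is the very estimate being proved. None of this is unfixable, but as written the crux is asserted rather than proved; the economical repair is to quote Salomaa--Soittola as the paper does.
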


\begin{proof}
Let $c$ be a letter occurring in $\sigma^{\omega}(a)$. Then $c$ also occurs in $\sigma^r(a)$, for some positive integer $r$. Then 
$$\vert \sigma^n(c)\vert  \leq \vert \sigma^{n+r}(a)\vert 
=\vert \sigma^r(\sigma^n(a))\vert \leq \Vert M_{\sigma^r}\Vert_{\infty} \vert \sigma^n(a)\vert\, ,
$$ 
where $\Vert\cdot\Vert_{\infty}$ stands for the usual infinite norm.   
This shows that $a$ has maximal growth. 
Recall now that by a classical result of Salomaa and Soittola (see for instance Theorem 4.7.15 in \cite{CaNi}), 
there exist a nonnegative integer $k$, a real number $\beta\geq 1$,  
and two positive real numbers $c_1$ and $c_2$ such that 
\begin{equation}\label{eq:theta2}
c_1n^k \beta^n  < \vert \sigma^n(a)\vert  < c_2 n^k \beta^n \,,
\end{equation}
for every positive integer $n$. Since $a$ has maximal growth, a classical theorem on matrices due to 
Gelfand (see for instance \cite{CaNi}) implies that $\beta$ must be equal to $\theta$, the spectral radius of the incidence 
matrix of $\sigma$.  
\end{proof}

\begin{prop}\label{prop:diotag}
Let ${\bf a}$ denote an infinite sequence than can be generated by a morphism with exponential growth.  
Then $\Dio({\bf a})>1$. 
\end{prop}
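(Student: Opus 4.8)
The plan is to produce arbitrarily long prefixes of $\mathbf{a}$ that have a repetitive prefix structure $UV^{\alpha}$ with ratio bounded away from $1$, using the exponential growth of $\sigma$. Write $\mathbf{a} = \varphi(\sigma^{\omega}(a))$ and let $\theta > 1$ be the spectral radius of $M_\sigma$; by Lemma~\ref{lem:max} the letter $a$ has maximal growth and $\vert\sigma^n(a)\vert \asymp n^k\theta^n$. Since $\sigma$ is prolongable on $a$, we have $\sigma(a) = aW$ for some nonempty word $W$ (nonempty because $\sigma$ has exponential growth, so $a$ cannot be a fixed letter), and hence $\sigma^{n+1}(a) = \sigma^n(a)\,\sigma^n(W)$. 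The idea is to iterate this: $\sigma^{\omega}(a)$ begins with $\sigma^n(a)$, which begins with $\sigma^{n-1}(a)$, and so on, so short prefixes are repeated inside longer ones, and because lengths grow geometrically the overlap is a constant proportion.

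More precisely, first I would pick a letter $c$ occurring in $W$ and look at $\sigma^n(c)$ as $n\to\infty$; since all letters appear in $\sigma^{\omega}(a)$ and $a$ has maximal growth, there is a letter whose growth rate is exactly $\theta$, and after composing with a bounded number of further iterations one may assume that $\sigma^n(a)$ itself reappears with a controlled shift. Concretely, let $p_n := \sigma^n(a)$, a prefix of $\sigma^{\omega}(a)$, with $\ell_n := \vert p_n\vert$. Then $p_{n+1} = p_n\,\sigma^n(W)$ is again a prefix, and iterating, $p_{n+m} = p_n\,\sigma^n(W)\,\sigma^{n+1}(W)\cdots\sigma^{n+m-1}(W)$ is a prefix. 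I want to find a prefix of the form $U V^\alpha$: take $U = \epsilon$ and observe that $p_n$ is a prefix of $p_{n+1}$ of relative length $\ell_n/\ell_{n+1}$, which tends to $1/\theta < 1$ — that is the wrong direction. Instead the repetition has to be found \emph{internally}: one uses that some $\sigma^j(a)$ (or $\sigma^j(c)$ for a maximal-growth letter $c$) occurs at two positions in a longer $\sigma^n(a)$ whose gap is small compared to the block length, forcing a large fractional power. This is exactly the mechanism behind the classical fact that purely morphic words of exponential growth contain $(1+\varepsilon)$-powers as prefixes; I would reproduce that argument, tracking the constant.

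The cleanest route: since $a$ has maximal growth with rate $\theta$, and $\sigma(a)=aW$ with $W\neq\epsilon$, consider the first letter $b$ of $W$. If $b$ also has maximal growth, then $\sigma^n(a)$ starts with $\sigma^n(a)$ and is immediately followed by $\sigma^n(b)$ whose length is $\Theta(\ell_n)$, and more usefully $\sigma^{n+1}(a) = \sigma^n(a)\sigma^n(b)\cdots$ means $\sigma^n(a)$ is a prefix of length $\sim \ell_n$ inside a word of length $\sim\theta\ell_n$; applying $\sigma$ once more, $\sigma^{n+2}(a) = \sigma^{n+1}(a)\sigma^{n+1}(b)\cdots = \sigma^n(a)\sigma^n(b)\cdots\sigma^n(a)\sigma^n(b)\cdots$, exhibiting $\sigma^n(a)\sigma^n(b)$ — no wait, the second copy of $\sigma^n(a)$ is the image under $\sigma$ of the $a$ at the start of $W$ only if $b=a$. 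The honest statement is: there exists a letter $d$ of maximal growth and an integer $j$ such that $d$ occurs in $\sigma^j(d)$ at a position that is not the last occurrence, i.e. $\sigma^j(d) = X d Y d Z$ or $\sigma^j(d)$ begins with $d$ and $d$ reoccurs; iterating $\sigma^j$ then gives prefixes $\sigma^{jn}(d)$ containing two occurrences of $\sigma^{j(n-1)}(d)$ separated by a bounded-ratio gap, yielding $UV^\alpha$ with $\alpha$ bounded below by a constant $>1$ independent of $n$, and $\vert UV^\alpha\vert\to\infty$. Passing through $\varphi$ (letter-to-letter) preserves the factor structure, so $\mathrm{Dio}(\mathbf{a}) > 1$.

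The main obstacle is pinning down \emph{why} such a self-overlapping iterate exists: this needs the exponential-growth hypothesis essentially. If no maximal-growth letter ever reoccurred in its own images (beyond the prolongation prefix), one could bound the number of maximal-growth letters in $\sigma^n(a)$ polynomially, contradicting the $\theta^n$-growth of $\ell_n$ together with the fact that only maximal-growth letters contribute the dominant $\theta^n$ term. I would make this precise by a counting argument: let $m_n$ be the number of occurrences of maximal-growth letters in $\sigma^n(a)$; then $\ell_n \le m_n \cdot \max_c \vert\sigma^0\cdots\vert$ plus lower-order terms, actually $\ell_n = \sum (\text{occurrences of } c)\cdot 1$ and the growth $\theta^n n^k$ must come from a block of length $\sim\theta^{n}$ generated by a single maximal-growth symbol deep inside, whose descendants include two copies of a shorter generation — a pigeonhole on the finitely many letters forces the recurrence. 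Once the self-overlap is in hand, the rest is the routine geometric-ratio computation feeding into Definition~\ref{def:dio}, and then Proposition~ABL is not even needed for this Proposition — it will be invoked afterwards to deduce transcendence.
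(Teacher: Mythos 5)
Your overall strategy is the one the paper uses: locate a repeated occurrence of a maximal-growth letter, apply $\sigma^n$ to the resulting prefix, and use maximality of growth to bound the exponent away from $1$ uniformly in $n$; the final passage through the coding $\varphi$ is also correct. But the step you yourself flag as ``the main obstacle'' --- producing the self-overlapping occurrence --- is exactly the crux, and your sketch of it does not close. First, the statement you aim for (a maximal-growth letter $d$ and an integer $j$ such that $d$ reoccurs at a non-final position in $\sigma^j(d)$) is stronger than what is needed and is not delivered by your pigeonhole: with $\sigma(a)=ab$, $\sigma(b)=bb$ the starting letter $a$ never reoccurs in any $\sigma^j(a)$, so you must switch letters, and reachability-from-itself of some maximal-growth letter $e$ does not by itself give a second occurrence, nor an occurrence positioned so that iterating $\sigma^j$ produces \emph{prefixes} of ${\bf u}$. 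Second, your counting argument (``bound the number of maximal-growth letters in $\sigma^n(a)$ polynomially, contradicting the $\theta^n$-growth'') is left as a heuristic, and the contradiction is not immediate: letters without maximal growth can still have growth rate $\theta^n$ with a smaller polynomial factor, so one really needs the two-sided estimate $c_1 n^k\theta^n<\vert\sigma^n(a)\vert<c_2 n^k\theta^n$ of Lemma~\ref{lem:max} together with $\vert\sigma^n(V_0)\vert=o(n^k\theta^n)$ for words $V_0$ containing no maximal-growth letter.

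The paper's route through this step is both less demanding and clean. It shows that maximal-growth letters occur \emph{infinitely often} in ${\bf u}=\sigma^\omega(a)$: otherwise ${\bf u}=\sigma^{n_0}(a){\bf w}$ with ${\bf w}$ free of maximal-growth letters, and writing $\sigma^{n_0+m_0}(a)=\sigma^{n_0}(a)V_0$ one gets $\vert\sigma^{n+n_0+m_0}(a)\vert\le c_2(n+n_0)^k\theta^{n+n_0}+\varepsilon n^k\theta^n$, which for a suitable choice of $m_0$ and $\varepsilon<c_1/2$ violates the lower bound of Lemma~\ref{lem:max}. Pigeonhole on the finite alphabet then gives a single maximal-growth letter $b$ occurring twice in ${\bf u}$, i.e.\ a prefix $UbVb$; applying $\sigma^n$ yields the prefix $\sigma^n(U)\,\sigma^n(bV)\,\sigma^n(b)=U_nV_n^{\delta_n}$ with $\vert U_nV_n^{\delta_n}\vert/\vert U_nV_n\vert\ge 1+1/(c_3\vert UbV\vert)$ uniformly in $n$. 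If you replace your self-overlapping-iterate claim by this ``two occurrences in ${\bf u}$'' statement and prove it by the contradiction just described, your argument goes through; as written, the key existence claim is unproven and the fallback sketch does not establish it.
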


\begin{proof}
Let ${\bf a}$ denote an infinite sequence than can be generated by a morphism with exponential growth.  
Then ${\bf a}=\varphi(\sigma^{\omega}(a))$ for some morphism $\sigma$ with exponential growth defined over a finite alphabet $A$, and some coding $\varphi$. 
Furthermore, we recall that the spectral radius $\theta$ of the incidence matrix $M_\sigma$ satisfies $\theta >1$. 
Set ${\bf u}:= \sigma^{\omega}(a)$.   
Since by definition $\sigma$ is prolongable on $a$ and all letters of $A$ appear in ${\bf u}$, 
Lemma \ref{lem:max} implies that $a$ has maximal growth and that there exist two positive real numbers $c_1$ and $c_2$ such that 
\begin{equation}\label{eq:c2}
c_1n^k \theta^n  < \vert \sigma^n(a)\vert  < c_2 n^k \theta^n,
\end{equation} 
for every positive integer $n$. 

\medskip

We now prove that there are infinitely many occurrences of letters with maximal growth in ${\bf u}$. Let us argue by contradiction. 
If there are only finitely many occurrences of letters with maximal growth, then there exists a positive integer $n_0$ such that 
${\bf u}=\sigma^{n_0}(a){\bf w}$ where ${\bf w}$ is an infinite word that contains no letter with maximal growth. Since $\theta>1$, there is an integer 
$m_0$ such that 
\begin{equation}\label{eq:c2'}
c_2/\theta^{m_0} < c_1/2 .
\end{equation}
Let us denote by $V_0$ the unique finite word such that $\sigma^{n_0+m_0}(a) = \sigma^{n_0}(a)V_0$. 
Then for every positive integer $n$ we get that 
$$
\vert\sigma^{n+n_0+m_0}(a) \vert = \vert \sigma^{n+n_0}(a) \vert +\vert  \sigma^n(V_0)\vert 
\leq c_2(n+n_0)^k \theta^{n+n_0} + \vert  \sigma^n(V_0)\vert \, .
$$
Given $\varepsilon>0$, we have that $\vert  \sigma^n(V_0)\vert < \varepsilon n^k\theta^n$ for all $n$ large enough, 
since by construction $V_0$ contains no letter with maximal growth.  
Choosing $\varepsilon < c_1/2$, we then infer from (\ref{eq:c2'}) that 
$$
\frac{\vert\sigma^{n+n_0+m_0}(a) \vert}{(n+n_0+m_0)^k\theta^{n+n_0+m_0}}  <  c_1\, ,
$$
as soon as $n$ is large enough. This provides a contradiction with (\ref{eq:c2}).  

\medskip

Since there are infinitely many occurrences in ${\bf u}$ of 
letters with maximal growth, the pigeonhole principle ensures the existence of such a letter $b$ 
that occurs at least twice in ${\bf u}$.  
In particular, there exist two possibly empty finite words $U$ and $V$ such that $UbVb$ is a prefix of $\bf u$. 
Set $r=\vert U\vert$, $s=\vert bV\vert$, and for every nonnegative integer $n$, 
$U_n:=\sigma^{n}(U)$, $V_n:=\sigma^n(bV)$.  Since by definition ${\bf u}$ is fixed by $\sigma$, we get that $U_nV_n^{\delta_n}$ 
is a prefix of ${\bf u}$, where 
$\delta_n:= 1+\vert \sigma^n(b)\vert/\vert \sigma^n(bV)\vert$. 
Since $b$ has maximal growth, there exists a positive real number $c_3$ such that 
$$
\vert \sigma^n(c)\vert < c_3\vert \sigma^{n}(b)\vert \, , 
$$
for every letter $c$ in ${\bf u}$. 
We thus obtain that 
$$
\frac{\vert U_nV_n^{\delta_n}\vert}{\vert U_nV_n\vert} \geq 
1 + \frac{\vert \sigma^n(b)\vert}{\vert \sigma^n(UbV)\vert} \geq 1 + \frac{1}{c_3(r+s)} >1.
$$
This proves that $\Dio({\bf u})>1$. By definition of the output sequence produced by $\mathcal T$, one has  
${\bf a} := \varphi({\bf u}) $.  
It thus follows that $\Dio({\bf a}) \geq \Dio({\bf u}) >1$, for applying a coding to an infinite word cannot decrease the Diophantine exponent. 
This ends the proof. 
\end{proof}

\begin{proof}[Proof of Theorem \ref{thm:2bis}] The result follows directly from Propositions ABL and~\ref{prop:diotag}. 
\end{proof}

%%%%%%%%%%%%%%%%%%%%%%%%%%%%%%%
\subsection{Proof of theorem~\ref{thm:1bis}}\label{sec:proof1bis}

In order to prove Theorem~\ref{thm:1bis}, we first  introduce a useful and natural equivalence relation on the set of internal configurations of a pushdown automaton. 
This equivalence relation is closely related to the classical Myhill-Nerode relation used in formal language theory.  
Roughly, we think about two configurations as being equivalent if, starting from each configuration, 
there is no way to distinguish them by feeding the machine 
with arbitrary inputs.

\medskip

Let us introduce  some notation. An internal configuration of a $k$-{pushdown automaton} $\M =(Q,\Sigma_k,\Gamma,\delta,q_0,\Delta,\tau)$ is  a pair 
$C=(q,W)\in Q\times \Gamma^*$ where $q$ denote the state of the finite control and $W$ denote the word written on the stack.  
Given an input word $w$, $C_{\mathcal M}(w)$, or for short $C(w)$ if there is no risk of confusion, 
will denote the internal configuration reached by the machine $\mathcal M$ 
when starting from the initial configuration and feeding it with the input $w$, that is  $C(w)=\delta(q_0,\#,w)$. By the way,  
$\tau(C(w))$ will denote the corresponding output symbol produced by $\mathcal M$. 
We will also use the classical notation 
$C \xvdash{w} C'$ to express that starting from the internal configuration $C$ and reading the input word $w$, 
the machine enters into the internal configuration $C'$:  $\delta(C,w)=C'$

When the input alphabet is $\Sigma_k$ and $n$ is a 
natural number, we will simply write $C(n)$ instead of $C(\langle n\rangle_k)$. 
%We will say that such internal configurations are accessible. 

\begin{defn}\label{def:equiv}
Let $\mathcal M$ be a $k$-pushdown automaton. 
Given two input words $x$ and $y$, we say that $C(x)$ and $C(y)$ are equivalent, 
and we note $C(x)\sim C(y)$ if:

for every input $w$, if  $C(x) \xvdash{w} C_1$ and $C(y)\xvdash{w}C_2$, one has $\tau(C_1)=\tau(C_2)$
 
\end{defn}

It is obvious that $\sim$ is an equivalence relation. 
We are now ready to state the following simple but key result. 

\begin{prop}\label{prop:equiv} 
Let $\xi$ be a real number generated by 
a  $k$-pushdown automaton. Let us assume that the equivalence relation $\sim$ 
is nontrivial in the sense that there exist 
two distinct positive integers $n$ and $n'$ such that $C(n)\sim C(n')$. 
Then $\xi$ is either rational or transcendental. 
\end{prop}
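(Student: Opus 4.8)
The plan is to show that the hypothesis $C(n) \sim C(n')$ for two distinct integers $n \ne n'$ forces the output sequence $\mathbf{a} = (a_m)_{m\ge 1}$ (where $\langle\{\xi\}\rangle_b = 0.a_1a_2\cdots$) to have Diophantine exponent strictly larger than $1$, after which Proposition ABL applies immediately. Without loss of generality assume $n < n'$, and write $u = \langle n\rangle_k$, $v = \langle n'\rangle_k$. The key idea is that equivalence of configurations means the machine, after reading $u$ or after reading $v$, produces exactly the same output on every continuation $w$; in particular, for every finite word $w$, $\tau(C(uw)) = \tau(C(vw))$. I would then exploit the fact that base-$k$ expansions of the integers in a suitable arithmetic-like family are obtained by appending common suffixes.

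First I would reduce to the case $|u| = |v|$, or more usefully, control the difference in lengths: since $n < n'$ we have $|u| \le |v|$; set $\ell = |v|$. The crucial observation is that for any word $w \in \Sigma_k^*$, the integer whose base-$k$ expansion is $v w$ equals $n' k^{|w|} + [w]_k$ (assuming $v$ does not start with $0$, which holds since it is a proper expansion), and similarly $u w$ is the expansion of $n k^{|w|} + [w]_k$ when $u$ does not start with $0$. So for every $m \ge 0$ and every residue-like choice, reading the suffix of a long integer's expansion produces equal outputs at two shifted positions. More precisely, I would fix a large integer $N$ and look at the block of digits $a_{Nk^t}, a_{Nk^t + 1}, \ldots$: writing each index in base $k$, the prefix is $\langle N\rangle_k$ followed by $t$ trailing digits, and the equivalence $C(u)\sim C(v)$ lets me splice. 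The cleanest route: since $C(n)\sim C(n')$, for every word $w$ we get $a_{[uw]_k} = a_{[vw]_k}$. Letting $w$ range over all words of length $j$ (including those with leading zeros, using the remark in \ref{subsec:eps} that this does not change the output class, or by a small separate argument), the set $\{[uw]_k : |w| = j\}$ is exactly the interval of $k^j$ consecutive integers $\{n k^j, n k^j + 1, \ldots, n k^j + k^j - 1\}$, and similarly for $v$. Hence the two blocks $a_{nk^j}\, a_{nk^j+1}\cdots a_{nk^j + k^j - 1}$ and $a_{n'k^j}\cdots a_{n'k^j + k^j -1}$ of length $k^j$ are equal, for every $j\ge 1$.

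Now I would build the repetition. Consider the prefix of $\mathbf{a}$ of length $n' k^j + k^j = (n'+1)k^j$. It contains, starting at position $n k^j$, a block $B$ of length $k^j$, and starting at position $n' k^j$, an identical block $B$ of length $k^j$. Since $n < n'$, these two copies are disjoint (choose $j$ with $k^j > n'$, or just note the gap is $(n'-n)k^j \ge k^j$). This gives a prefix of the form $U\, B\, X\, B$ where $U$ has length $nk^j$, the first $B$ starts at $nk^j$, $X$ fills the gap, and the second $B$ ends at $(n'+1)k^j$. To extract a genuine fractional power with exponent $>1$ I would instead compare the two blocks starting at $nk^j$ and at $n k^j + (n'-n)k^j = n'k^j$: the word $P := a_{nk^j}\cdots a_{n'k^j - 1}$ of length $(n'-n)k^j$ satisfies $a_{nk^j + i} = a_{n'k^j + i}$ for $0 \le i < k^j$, i.e.\ the word of length $(n'-n)k^j + k^j$ starting at position $nk^j$ is $P$ followed by its own prefix of length $k^j$; that is, it equals $P^{1 + k^j/((n'-n)k^j)} = P^{1 + 1/(n'-n)}$. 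Taking $U$ to be the prefix of length $nk^j$, we get a prefix $U P^{1+1/(n'-n)}$ with
$$
\frac{|UP^{1+1/(n'-n)}|}{|UP|} = \frac{nk^j + (n'-n)k^j + k^j}{nk^j + (n'-n)k^j} = \frac{(n'+1)k^j}{n'k^j} = \frac{n'+1}{n'} > 1.
$$
Since this holds for all large $j$, these prefixes are arbitrarily long, so $\Dio(\mathbf{a}) \ge (n'+1)/n' > 1$, and Proposition ABL gives the conclusion.

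The main obstacle I anticipate is the leading-zeros issue: the equivalence $C(n)\sim C(n')$ is stated for arbitrary input words $w$, so $a_{[uw]_k} = a_{[vw]_k}$ is fine, but one must be careful that $[uw]_k$ with $w$ ranging over \emph{all} length-$j$ words really does sweep out a full interval of $k^j$ consecutive integers and that these equal the positions in the output sequence indexed as in the definition of $\mathbf{a}$ (which uses proper expansions $\langle m\rangle_k$). Since $u = \langle n\rangle_k$ has no leading zero, $uw$ is the proper expansion of $nk^j + [w]_k$ for every $w$ of length $j$, so this is in fact automatic; the only genuinely delicate point is handling small $n$ (e.g.\ $n = 0$, where $\langle 0\rangle_k = \epsilon$), which I would dispatch by noting that $n=0$ forces $C(0) = (q_0,\#)$ and the argument still runs with $U$ empty, or simply by first shifting to avoid the degenerate case. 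A secondary point is making sure the two copies of the repeated block do not overlap in a way that breaks the power computation, which is handled by taking $j$ large enough that $(n'-n)k^j \ge k^j$, true for all $j$ since $n' > n$.
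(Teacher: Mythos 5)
Your proposal is correct and follows essentially the same route as the paper: from $C(n)\sim C(n')$ one gets $a_{[uw]_k}=a_{[vw]_k}$ for all $w$, hence identical blocks of length $k^j$ at positions $nk^j$ and $n'k^j$, yielding prefixes $U_jV_j^{1+1/(n'-n)}$ whose length ratio is bounded away from $1$, so Proposition ABL applies. The only blemish is a harmless off-by-one in $\vert U\vert$ (the prefix preceding position $nk^j$ has length $nk^j-1$, not $nk^j$), which does not affect the conclusion.
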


\begin{proof}
Let $\xi$ be a real number whose base-$b$ expansion can be generated by a 
a  $k$-pushdown automaton $\mathcal M$. Let us denote by ${\bf a} := (a_n)_{n\geq 1}$ 
the output sequence of $\mathcal M$, so that $\langle \{\xi\}\rangle_b = 0.a_1a_2\cdots$.   
Let us assume that there exist two positive integers $n$ and $n'$, $n<n'$, such that $C(n)\sim C(n')$.  
Set $w_n:= \langle n\rangle_k$ and $w'_n=\langle n'\rangle_k$. By definition of the equivalence relation, one has:
$$
a_{[w_nw]_k} = a_{[w'_nw]_k} \, ,
$$
for every word $w\in \Sigma_k^*$. 
Given a positive integer $\ell$, we obtain in particular the following equalities: 
\begin{equation}\label{eq:Nerode}
\forall i\in [0,k^{\ell}-1],\;\; a_{k^{\ell}n+i} = a_{k^{\ell}n'+i} \, .
\end{equation} 
Set $U_{\ell} := a_1a_2\cdots a_{k^{\ell}n-1}$ and $V_{\ell} := a_{k^{\ell}n}a_{k^{\ell}n+1}\cdots a_{k^{\ell}n'-1}$.  
We thus deduce from (\ref{eq:Nerode}) that the word 
$$
U_{\ell}V_{\ell}^{1+ 1/(n'-n)} := a_1a_2\cdots a_{k^{\ell}n-1}a_{k^{\ell}n}a_{k^{\ell}n+1}\cdots 
a_{k^{\ell}n'-1}a_{k^{\ell}n}\cdots a_{k^{\ell}n+k^{\ell}-1}
$$
is a prefix of ${\bf a}$.  Furthermore, one has 
$$
\vert U_{\ell}V_{\ell}^{1+ 1/(n'-n)}\vert/\vert U_{\ell}V_{\ell}\vert = 1 + \frac{1}{n'-1/k^l} \geq 1+ \frac{1}{n'-1}\, \cdot
$$
 Since the exponent $1+ 1/(n'-1)$ does not depend on $\ell$, 
this shows that 
$$
\Dio({\bf a}) \geq 1+ 1/(n'-1) >1 \, .
$$
Then Proposition ABL implies that $\xi$ is either rational or transcendental, which ends the proof. 
\end{proof}

Notice that, with this proposition in hand, we observe that Theorem AB becomes obvious. 
\begin{proof}[Proof of Theorem AB]
Indeed, a  finite $k$-automaton can be seen as a $k$-pushdown automaton with empty stack alphabet (transitions only depend on the state and do not act on the stack), so a configuration is just given by the state of the finite control, and the empty stack.  

Since there are only a finite number of states, a finite automaton has only a finite number of different 
possible configurations.  By the pigeonhole principle, there thus exist two distinct positive integers  
$n$ and $n'$ such that $C(n)=C(n')$. Then the proof follows from Proposition \ref{prop:equiv}. 
\end{proof}

%%%%%%%%%%%%%%
We our now ready to prove the Theorem~\ref{thm:1bis}.

\begin{proof}[Proof of Theorem ~\ref{thm:1bis}]
Let $\xi$ be a real number that can be generated by a $k$-pushdown automata, say  
$\mathcal M:=(Q,\Sigma_k,\Gamma,\delta,q_0,\Delta,\tau)$. 
Given an input word $w\in \Sigma_k^*$, we denote by $q_{w}$ the state reached by $\mathcal M$ when starting from its initial configuration 
and reading the input $w$. We also denote by $S(w)\in \Gamma^*$ the corresponding content of the stack of $\mathcal M$ and by 
$H(w)$ the corresponding stack height, that is the length of  the word $S(w)$.  
With this notation, we obtain that starting from the initial configuration $(q_0,\#)$ and reading the input $w$, $\mathcal M$ 
reaches the internal configuration$(q_w,S(w))$, that is $(q_0,\#) \xvdash{w} (q_w,S(w))$.

\medskip

Let us denote by $\mathcal R_k:= \left(\Sigma_k\setminus \{0\}\right)\Sigma_k^*$ the language of all proper base-$k$ expansion of 
positive integers  (written from most to least significant digit). Then for every positive integer $n$, there is a unique word $w$ 
in $\mathcal R_k$ such that $\langle n\rangle_k=w$. For every positive integer $m$, we consider the set 
$$
\mathcal H_m := \left\{w \in \mathcal R_k \mid H(w) \leq m \right\}. 
$$
We distinguish two cases. 

\medskip

(i) Let us first assume that there exists a positive integer $m$ such that $\mathcal H_m$ is infinite.  
Note that for all $w\in \mathcal H_m$, the configuration $C(w) = (q_w,S(w))$ belongs to the finite set 
$\Delta \times \Gamma^{\leq m}$, where $\Gamma^{\leq m}$ denotes the set of words of length at most $m$ defined over $\Gamma$. 
Since $\mathcal H_m$ is infinite, the pigeonhole principle ensures the existence of to distinct 
words $w$ and $w'$ in $\mathcal H_m$ such that $C(w)=C(w')$. Setting $n:=[w]_k$ and $n':=[w']_k$, we obtain that $n\not= n'$ and 
$C(n)=C(n')$. In particular, $C(n)\sim C(n')$. Then Proposition \ref{prop:equiv} applies, which concludes the proof in that case. 

\medskip

(ii) Let us assume now that all sets $\mathcal H_m$ are finite. 
For every $m\geq 1$, we can thus pick a word $v
_m$ in $\mathcal H_m$ with maximal length.  
Note that since $\mathcal H_m\subset \mathcal H_{m+1}$, we have $\vert v_m\vert \leq \vert v_{m+1}\vert$. 
Furthermore, one has 
$$
\mathcal R_k = \bigcup_{m=1}^{\infty} \mathcal H_m \, ,
$$ 
which implies that the set  $\left\{ v_m \mid m\geq 1\right\}$
is infinite. 

As discussed in Remark \ref{subsec:eps}, we can assume without loss of generality 
that all $\epsilon$-moves of $\mathcal M$  are decreasing ones.   
Furthermore, recall that all possible $\epsilon$-moves are effectively 
performed after reading the last symbol of a given input. 
This leads to the following alternative. For every internal configuration $(q_w,S(w))$,  
each time a new input symbol $a$ is consumed, one has:

\medskip

\begin{itemize}

  \item  Either the stack height is decreased, which means that $H(wa) < H(w)$. 
 
 \medskip
 
  \item  Or only the topmost symbol of the stack has been modified, which formally means that there exist two words 
  $X,Y\in \Gamma^*$ and a letter $z\in \Gamma$ such that $S(w)=Xz$ while $S(wa)=XY$.    
 
\end{itemize}

\medskip

The definition of $v_m$ ensures that 
\begin{equation}\label{eq:sw}
\forall w \in \Sigma_k^*,\,   \;\;\; H(v_m) < H(v_mw) \,  .
\end{equation} 
Furthermore, if $m$ is large enough, we have that $H(v_m) > 1$. 
For such $m$, let us decompose the stack word $S(v_m)$ as 
$$
S(v_m)=X_mz_m\, ,
$$ 
where $z_m\in \Gamma$ is the topmost stack symbol.  
Inequality (\ref{eq:sw}) implies that for all $w\in \Sigma_k^*$, the word $X_m$ is a prefix of the stack word $S(v_mw)$.  
In other words, the part of the stack corresponding to the word $X_m$ will never be modified or even read during the computation 
$(q_{v_m},S(v_m)) \xvdash{w} (q_{v_mw},S(v_mw))$.  
This precisely means that 
$$
(q_{v_m},S(v_m))\sim (q_{v_m},z_m) \, .
$$ 
Note that $(q_{v_m},z_m) \in \Delta \times \Gamma$, which is a finite set, while we already observed that 
$\left\{ v_m \mid m\geq 1\right\}$
is infinite. 
The pigeonhole principle thus implies the existence of two distinct integers $m$ and $m'$ such that 
$v_m\not=v_{m'}$ and $C(v_m) \sim C(v_{m'})$. Setting $n:=[v_m]_k$ and $n':=[v_{m'}]_k$, we get that  
$C(n)\sim C(n')$ and $n\not= n'$. Then Proposition \ref{prop:equiv} applies, which ends the proof. 
\end{proof}

%%%%%%%%%%%%%%%%%%%%%%%%%%%%%%%%%%%%%%%%%%%%%%%%%%%

%%%%%%%%%%%%%%%%%%%%%%%%%%%%%%%%%%%%%%%%%%%
\section{Some related models of computations : tag machines and stack machines}\label{sec:ts}

In this section, we complete our study by discussing different types of machines.  
We first consider the \emph{tag machine} that was originally introduced by Cobham \cite{Co68} 
and we prove that Theorem \ref{thm:2bis} is well equivalent to Cobham's second claim. Then we introduce a general model of computation 
called \emph{multistack machine} and we show how Theorem \ref{thm:1bis} allows us to solve the Hartmanis--Stearns problem for this class of machines. 

%%%%%%%%%%%%%%%%%%%%%
\subsection{Tag-machines}\label{sec: tag} 

Cobham originally investigated in \cite{Co68} a model of computation called \emph{tag machine} whose outputs 
turn out to be precisely the morphic sequences defined in Section \ref{sec: morphism}. We describe here this model and the associated 
notion of dilation factor, and prove 
the equivalence between sequences produced by a tag machine with dilation factor larger than one and sequences generated by 
a morphism with exponential growth. This shows that Theorem \ref{thm:2bis} is well equivalent to Cobham's second claim, as 
claimed in the introduction.

 \medskip
 
 A tag machine is a two-tape enumerator that can be described as follows. 
 In internal structure, a tag machine $\mathcal T$ has:
 
 \medskip

\begin{itemize}

\item[$\bullet$] A finite state control. 

\medskip

\item[$\bullet$] A tape on which operate a read-only  head $\mathfrak R$ and a write-only head $\mathfrak W$.  

\end{itemize}

\medskip

In external structure, $\mathcal M$ has:

\medskip

\begin{itemize}

\item[$\bullet$]  An output tape on which operates a write-only head $\mathfrak W'$ and from which nothing can be erased. 

\end{itemize}

\medskip

\begin{figure}[!h]
\begin{center}
\psfrag{a}{\small$b$}
\psfrag{s(a)}{\small$\sigma(b)$}
\psfrag{p(a)}{\small$\varphi(b)$}
\psfrag{R}{\small $\mathfrak R$}
\psfrag{W}{\small$\mathfrak W$}
\psfrag{W'}{\small$\mathfrak W'$}
\psfrag{Q}{$Q$}
\psfrag{input tape}{\small Input tape}
\psfrag{control units}{\small Finite control}
\psfrag{output tape}{\small Output tape}
\includegraphics[width= 7.5cm]{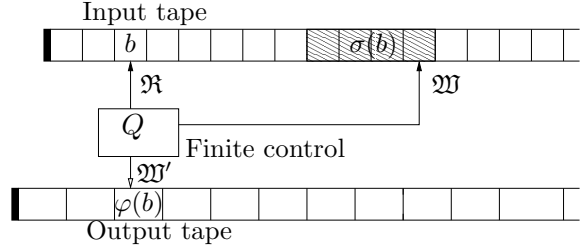}\caption{A tag machine}\label{fig:tagmachine}
\end{center}
\end{figure}

\medskip

Let us briefly describe how a tag machine operates. The finite state control of $\mathcal T$ contains some basic information: 
a finite set of symbols $A$ together with a special starting symbol $a$, so that with every element $b$ 
of $A$ is associated a finite word $\sigma(b)$ over $A$ and a symbol $\varphi(b)$ that belongs to a 
finite set of symbols $B$. 
When the computation starts, $\mathfrak R$ and $\mathfrak W$ are both positioned 
on the leftmost square of the (blank) tape and 
$\mathfrak W$ proceeds writing the word $\sigma(a)$, one symbol per square. Then both head $\mathfrak R$ and $\mathfrak W$ 
move one square right, $\mathfrak R$ scans the symbol written in the corresponding square, say $b$, and $\mathfrak W$ 
proceeds writing the word $\sigma(b)$. Again both heads move one square to the right and the process 
keeps on for ever unless $\mathfrak R$ eventually catches $\mathfrak W$ in which case the machine stops. Meanwhile, each time $\mathfrak R$ 
reads a symbol $b$ on the internal tape, $\mathfrak W'$ writes the symbol $\varphi(b)$ on the output tape and moves one square right.  
Each symbol written on the output tape is thus irrevocable and cannot be erased in the process of computation.  
The output sequence produced by $\mathcal T$ is the sequence of symbols written on its output tape. 

Using this description, Cobham extracts in \cite{Co68} the following usual definition of a tag machine in terms 
of morphisms which confirms that output of tag machines and morphic sequences are the same.   

\begin{defn}\label{def:tag}
{\rm A tag machine is a $5$-tuple ${\mathcal T} = (A,\sigma,a,B,\varphi)$ where:

\begin{itemize}

\medskip

  \item
 $A$ is a  finite set of symbols called the \emph{internal alphabet}. 
 
  \medskip
 
  \item 
$a$ is a an element of $A$ called the \emph{starting symbol}.
 
 \medskip
 
  \item 
$\sigma$ is a \emph{morphism} of $A^*$ prolongable on $a$.

\medskip

  \item 
 $B$ is a  finite set of symbols called the \emph{external alphabet}. 
 
  \medskip
 
  \item 
$\varphi$ is a letter-to-letter morphism from $A$ to $B$.

 \end{itemize}
 
 \medskip
 
 The output sequence of $\mathcal T$ is the morphic sequence $\varphi(\sigma^{\omega}(a))$.  
 A tag machine is said to be uniform (resp.\ $k$-uniform) when the morphism $\sigma$ has 
the additional property to be uniform (resp.\ $k$-uniform). }
\end{defn}

\medskip

In \cite{Co68}, Cobham also  introduced the following interesting  quantity which measures the rate of production of symbols by a tag machine.  

\begin{defn}{\rm
The ({\it minimum}) {\it dilation} factor of a tag machine $\mathcal T$ is defined by
$$
\mathfrak d(\mathcal T) = \liminf_{n\to\infty} \frac{\mathfrak W(n)}{n} \, ,
$$
where $\mathfrak W(n)$ denotes the position of the write-only head $\mathfrak W$ of $\mathcal T$ when the read-only head 
$\mathfrak R$ occupies the $n$-th square of the internal tape.} 
\end{defn}

\begin{rem}\label{rem:dilation}
It follows from Definition \ref{def:tag} that $\mathfrak W(n)=\vert\sigma(u_1u_2\cdots u_n)\vert$, where $u_1u_2\cdots u_n$ is the 
prefix of length $n$ of the infinite word $\sigma^{\omega}(a)$. 
\end{rem}

It is easy to see that uniform tag machines, or equivalently finite automata used as transducers 
(see Section \ref{sec: morphism}), all have dilation factor at least two.  As already mentioned,  
Cobham claimed that the base-$b$ expansion of an algebraic irrational number cannot be generated by a tag machine with dilation factor larger than one. 
This result will immediately follow from Theorem \ref{thm:2bis} once we will have proved Proposition  \ref{prop:equivtag} below. 
It  is stated without proof by Cobham in \cite{Co68}. 

\begin{prop}\label{prop:equivtag}
Let $\mathcal T:=(A,\sigma,a,B,\varphi)$ be a tag machine. Then the following statements are equivalent:

\medskip

\begin{itemize}

\item[\rm{(i)}]  $\mathfrak d({\mathcal T}) > 1$.

\medskip

\item[\rm{(ii)}]   The spectral radius of $M_{\sigma}$ is larger than one.

\end{itemize}
\end{prop}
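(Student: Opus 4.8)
The plan is to prove both implications by relating the dilation factor directly to the growth rate $\theta$ of $\sigma$, using Lemma~\ref{lem:max} and Remark~\ref{rem:dilation}. By Remark~\ref{rem:dilation} we have $\mathfrak W(n)=\vert\sigma(u_1\cdots u_n)\vert$, where $u_1u_2\cdots$ is the infinite word $\sigma^\omega(a)$. Write $u_1\cdots u_n$ for the prefix of length $n$. The first observation is that $\mathfrak W(n)$ and the iterated lengths $\vert\sigma^j(a)\vert$ are commensurable: since $\sigma^{j}(a)$ is a prefix of $\sigma^\omega(a)$ and $\vert\sigma^{j}(a)\vert\to\infty$, for every $n$ there is a unique $j=j(n)$ with $\vert\sigma^{j}(a)\vert\le n<\vert\sigma^{j+1}(a)\vert$, and then $\mathfrak W(n)=\vert\sigma(u_1\cdots u_n)\vert$ lies between $\vert\sigma^{j+1}(a)\vert$ and $\vert\sigma^{j+2}(a)\vert$ (applying $\sigma$ to a prefix gives a prefix). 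By Lemma~\ref{lem:max}, $\vert\sigma^{m}(a)\vert\asymp m^{k}\theta^{m}$, so the ratio $\vert\sigma^{j+1}(a)\vert/\vert\sigma^{j+1}(a)\vert$-type comparisons give $\vert\sigma^{m+1}(a)\vert/\vert\sigma^{m}(a)\vert\to\theta$ when $\theta>1$.

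For the implication (ii) $\Rightarrow$ (i): assume $\theta>1$. I would bound $\mathfrak W(n)/n$ from below. With $j=j(n)$ as above, $\mathfrak W(n)\ge\vert\sigma^{j+1}(a)\vert$ and $n<\vert\sigma^{j+1}(a)\vert$, so naively this only gives $\mathfrak W(n)/n>1$, not a liminf bounded away from $1$. The fix is to be more careful: $\mathfrak W(n)=\vert\sigma(u_1\cdots u_n)\vert$ and $u_1\cdots u_n$ has $u_1\cdots u_{\vert\sigma^{j}(a)\vert}=\sigma^{j}(a)$ as a prefix, so $\mathfrak W(n)\ge\vert\sigma^{j+1}(a)\vert+\vert\sigma(u_{\vert\sigma^{j}(a)\vert+1}\cdots u_n)\vert$, while $n=\vert\sigma^{j}(a)\vert+(n-\vert\sigma^{j}(a)\vert)$. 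Each extra letter contributes at least $1$ to $n$ and at least... here I need a uniform lower bound on $\vert\sigma(c)\vert$ for letters $c$ actually occurring, but $\sigma$ is non-erasing (Remark after the definition of morphic sequences), so $\vert\sigma(c)\vert\ge 1$; that still only gives ratio $\ge 1$. The cleanest route is therefore to use the maximal-growth letter: let $b$ be a letter of maximal growth (Lemma~\ref{lem:max} shows $a$ is one). One shows $b$ occurs in $\sigma^\omega(a)$ with positive frequency along the sequence $\sigma^{j}(a)$ — indeed the number of occurrences of $b$ in $\sigma^{j}(a)$ grows like $\theta^{j}$ up to polynomial factors, by the Perron--Frobenius/Salomaa--Soittola analysis underlying Lemma~\ref{lem:max}. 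Since $\vert\sigma(b)\vert\ge 2$ necessarily (a maximal-growth letter in an exponentially growing morphism cannot map to a single letter, else its iterates would have bounded length or grow only via that single letter — one checks $\vert\sigma(b)\vert=1$ forces $\vert\sigma^n(b)\vert$ bounded or eventually equal to $\vert\sigma^n(c)\vert$ for the image letter $c$, contradicting maximality together with $\theta>1$), the prefix $\sigma^{j}(a)$ of length $N_j:=\vert\sigma^{j}(a)\vert$ contains $\asymp N_j$ occurrences of $b$, so $\mathfrak W(N_j)=\vert\sigma^{j+1}(a)\vert=\sum_{c}\vert\sigma^{j}(a)\vert_c\,\vert\sigma(c)\vert\ge N_j+\vert\sigma^{j}(a)\vert_b(\vert\sigma(b)\vert-1)\ge N_j(1+\eta)$ for some fixed $\eta>0$. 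Interpolating to general $n$ (writing $n$ between $N_j$ and $N_{j+1}$ and using that $N_{j+1}/N_j$ is bounded) gives $\liminf\mathfrak W(n)/n\ge 1+\eta'>1$, i.e.\ $\mathfrak d(\mathcal T)>1$.

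For the implication (i) $\Rightarrow$ (ii), equivalently $\neg$(ii) $\Rightarrow$ $\neg$(i): assume $\theta=1$, so by Lemma~\ref{lem:max} (applied after restricting to the letters occurring in $\sigma^\omega(a)$, which is legitimate since $\sigma$ restricted to that alphabet still has $a$ prolongable and all its letters appearing) we have $\vert\sigma^{n}(a)\vert\asymp n^{k}$, polynomial growth. Then $N_{j+1}/N_j=(1+o(1))((j+1)/j)^{k}\to 1$. Since $\mathfrak W(N_j)=N_{j+1}$, we get $\mathfrak W(N_j)/N_j\to 1$, so along this subsequence the ratio tends to $1$, and as it is always $\ge 1$ (the write head never lags behind the read head — formally $\mathfrak W(n)\ge n$ because the machine would otherwise halt, or simply $\vert\sigma(u_1\cdots u_n)\vert\ge\vert u_1\cdots u_n\vert=n$ as $\sigma$ is non-erasing) the liminf is exactly $1$, hence $\mathfrak d(\mathcal T)=1$, not $>1$.

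The main obstacle I anticipate is the careful bookkeeping in (ii) $\Rightarrow$ (i): establishing that a maximal-growth letter occurs with frequency bounded below along the prefixes $\sigma^{j}(a)$ and that such a letter has $\vert\sigma(b)\vert\ge 2$. Both facts are essentially consequences of Perron--Frobenius theory for the (primitive part of the) incidence matrix $M_\sigma$ — the dominant eigenvalue $\theta>1$ has a positive eigenvector supported on the recurrent maximal-growth letters, and the vectors $(\vert\sigma^{j}(a)\vert_c)_c$ align asymptotically with that eigenvector after normalization — but making this rigorous in the non-primitive case (the matrix need not be irreducible) requires the block/Frobenius-normal-form decomposition already invoked implicitly in Lemma~\ref{lem:max}, so I would either cite it cleanly from \cite{CaNi} or reduce to the maximal-growth sub-alphabet where one gets primitivity after passing to a power of $\sigma$. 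The interpolation from the subsequence $N_j$ to all $n$ is routine once $N_{j+1}/N_j$ is controlled, and the non-erasing hypothesis (in force throughout by the remark following the morphic-sequence definitions) is what keeps $\mathfrak W(n)\ge n$ and makes the liminf well-behaved.
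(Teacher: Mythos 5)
Your treatment of (i) $\Rightarrow$ (ii) (in contrapositive form) is correct and is essentially the paper's argument read backwards: if $\theta=1$ then Lemma~\ref{lem:max} gives $\vert\sigma^n(a)\vert\asymp n^k$, so with $N_j:=\vert\sigma^j(a)\vert$ one has $\mathfrak W(N_j)/N_j=N_{j+1}/N_j\to 1$, whence $\mathfrak d(\mathcal T)=1$. The converse direction, however, rests on two claims that are both false. First, a maximal-growth letter need not occur with positive density in the prefixes $\sigma^j(a)$. Take $\sigma(a)=ab$, $\sigma(b)=bbc$, $\sigma(c)=cc$: here $\theta=2$, $\vert\sigma^n(a)\vert\asymp n2^n$ and $\vert\sigma^n(b)\vert\asymp n2^n$, so the maximal-growth letters are exactly $a$ and $b$ (since $\vert\sigma^n(c)\vert=2^n$), yet $\sigma^j(a)$ contains only $2^j$ occurrences of $a$ or $b$ among $N_j\asymp j2^{j}$ letters, a density tending to $0$. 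Your own intermediate sentence already contains the non sequitur: ``the number of occurrences of $b$ grows like $\theta^j$ up to polynomial factors'' does not give ``$\asymp N_j$ occurrences,'' because $N_j\asymp j^k\theta^j$ and the polynomial factors need not match when $k\geq 1$. The underlying confusion is between rows and columns of $M_\sigma^n$: the growth $\vert\sigma^n(b)\vert$ is a column sum, while the frequency $\vert\sigma^j(a)\vert_b$ is an entry of the row indexed by $b$; in the reducible case the dominant left and right eigendata can be supported on disjoint sets of letters (in the example above, the normalized count vectors concentrate on $c$, which has non-maximal growth). Second, a maximal-growth letter can satisfy $\vert\sigma(b)\vert=1$: for the Fibonacci morphism $\sigma(a)=ab$, $\sigma(b)=a$, the letter $b$ has maximal growth because $\vert\sigma^n(b)\vert=\vert\sigma^{n-1}(a)\vert$, yet $\vert\sigma(b)\vert=1$; your parenthetical argument ruling this out is not valid. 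Consequently the inequality $\mathfrak W(N_j)\geq N_j+\vert\sigma^j(a)\vert_b(\vert\sigma(b)\vert-1)\geq(1+\eta)N_j$ is not established. (Neither example contradicts the Proposition itself, of course --- in the first one every letter expands --- they only defeat the lemma you are relying on.)

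What is really needed is a letter (or set of letters) that both expands under $\sigma$ and occurs with positive lower density along \emph{all} prefixes, and that is more or less equivalent to the statement being proved, so the choice of ``maximal growth'' as the sufficient condition does not deliver. The paper sidesteps frequency considerations entirely by a contradiction argument: if $\mathfrak d(\mathcal T)=1$, pick a prefix $u_1\cdots u_N$ with $\vert\sigma(u_1\cdots u_N)\vert<(1+\delta)N$; writing $\sigma(u_1\cdots u_N)=u_1\cdots u_NV$ with $\vert V\vert<\delta N$ and iterating gives $\vert\sigma^m(u_1\cdots u_N)\vert<\bigl(1+\delta(C^m-1)/(C-1)\bigr)N$ with $C=\Vert M_\sigma\Vert_\infty$, while Lemma~\ref{lem:max} forces $\vert\sigma^m(u_1\cdots u_N)\vert>(1+\varepsilon)N$ for a suitable fixed $m$ (comparing $u_1\cdots u_N$ with the largest $\sigma^n(a)$ it contains); choosing $\delta$ small yields the contradiction. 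To repair your proof you would either need an argument of this type, or a genuine Frobenius-normal-form analysis identifying which letters actually carry the expansion in the prefixes --- the reduction ``to the maximal-growth sub-alphabet'' you propose does not work, since that sub-alphabet need not be $\sigma$-invariant and, as shown above, need not be visited with positive frequency.
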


\begin{proof}
Let us first prove that (i) implies (ii). Since $\mathfrak d(\mathcal T) > 1$, Remark \ref{rem:dilation} ensures the existence of 
a positive real number $\varepsilon$ such that 
$$
\frac{\vert \sigma^{n+1}(a)\vert}{\vert \sigma^n(a)\vert} = \frac{\mathfrak W(\vert \sigma^n(a)\vert)}{\vert \sigma^n(a)\vert} > 1+\varepsilon \,,
$$
for every $n$ large enough.  This implies that there exists a positive real number $c$ such that 
$$
\vert \sigma^n(a)\vert > c (1+\varepsilon)^n\, ,
$$
for every positive integer $n$. By Lemma \ref{lem:max}, we obtain that $\theta$, the spectral radius of $M_{\sigma}$, must satisfy  
$\theta \geq 1+\varepsilon>1$. 

\medskip

Let us now prove that (ii) implies (i).  Let $\theta>1$ denote the spectral radius of $M_{\sigma}$. We argue by contradiction assuming that 
$\mathfrak d(\mathcal T)=1$. 
Let ${\bf u}:=\sigma^{\omega}(a)$. 
By Lemma \ref{lem:max}, 
 there exist a nonnegative integer $k$,   and two positive real numbers $c_1$ and $c_2$ such that 
\begin{equation}\label{eq:theta3}
 c_1n^k\theta^n <\vert \sigma^n(a)\vert < c_2n^k\theta^n \, ,
\end{equation}
for every positive integer $n$.  
Set $C:=\Vert M_{\sigma}\Vert_{\infty}$. Let $\varepsilon$ be a positive number and let $m$ be a positive integer such that 
$$
\theta^m > C(1+\varepsilon)c_2/c_1 \,.
$$
We then infer from (\ref{eq:theta3}) that 
$$
\vert \sigma^{m+n}(a)\vert > c_1(m+n)^k\theta^{m+n} > \theta^m c_1n^k\theta^n> C(1+\varepsilon) c_2n^k\theta^n
$$
and thus
$$
\vert \sigma^{m+n}(a)\vert > C(1+\varepsilon) \vert \sigma^n(a)\vert\, ,
$$
for every positive integer $n$.  Let $N$ be a positive integer and let us denote by $u_1u_2\cdots u_{N}$ 
the prefix of length $N$ of ${\bf u}$. Let $n$ be the largest integer such that $\sigma^n(a)$ is a prefix of 
$u_1u_2\ldots u_{N}$. It thus follows that 
$$
\vert \sigma^m(u_1\cdots u_{N} )\vert \geq \vert \sigma^m(\sigma^n(a))\vert
 = \vert\sigma^{m+n}(a)\vert> C(1+\varepsilon) \vert \sigma^n(a)\vert \, .
$$
Since the definition of $n$ ensures 
that $\vert \sigma^n(a)\vert > N/C$, we have
\begin{equation}\label{eq:sigmam}
\vert \sigma^m(u_1u_2\cdots u_{N} )\vert > (1+\varepsilon) N \, .
\end{equation} 
On the other hand, for every $\delta>0$ there exists a positive integer $N$ such that:
$$
\frac{\vert \sigma(u_1u_2\cdots u_{N}) \vert}{N} < 1+\delta \, ,
$$
since by assumption $\mathfrak d(\mathcal T)=1$. 
Let $V$ be the finite word defined by the relation 
$\sigma(u_1u_2\cdots u_{N})=u_1u_2\ldots u_{N}V$. Thus $\vert V\vert < \delta N$. 
Now it is easy to see that 
$$
\sigma^m(u_1u_2\ldots u_{N})=u_1u_2\ldots u_{N}V\sigma(V)\cdots \sigma^{m-1}(V) \,,
$$
which implies that 
$$
\vert \sigma^m(u_1u_2\ldots u_{N})\vert < N + \delta N + C\delta N+\cdots + C^{m-1}\delta N \,.
$$
Choosing $\delta < \varepsilon(C-1)/(C^m-1)$, we get that $\vert \sigma^m(u_1u_2\ldots u_{N})\vert<(1+\varepsilon)N$, 
which contradicts (\ref{eq:sigmam}). This ends the proof. 
\end{proof}

%%%%%%%%%%%%%%%%%%%%%%%%%%%%%%%%%%%%
\subsection{Pushdown automata viewed as multistack machines}\label{subsec:mult}
%%%%%%%%%%%%%%%%%%%%%%%%%%%%%%%

In this section, we discuss how our main result allows us to revisit the Hartmanis--Stearns problem as follows.  
Instead of some time constraint,  we put some restriction based on the way the memory may be stored by Turing machines. 
We consider a classical  model of computation called {\it multistack machine}. It corresponds to a version of the deterministic Turing 
machine where the memory is simply organized by stacks. It is as general as the Turing machine if one allows two or more stacks. 
Furthermore the one-stack machine turns out to be equivalent to the deterministic pushdown automaton, while a zero-stack machine is 
just a finite automaton (a machine with a strictly finite memory only stored in the finite state control).

\medskip

For a formal definition of Turing machines the reader is referred to any of the classical references such as \cite{HMU,Mi,Si}. 
We will content ourself with the following informal definition of multistack machines.  
When used as a transducer, a multitape Turing machine can be divided into three parts:  

\begin{itemize}

\medskip

\item[$\bullet$] The input tape, on which there is a read-only head. 

\medskip

\item[$\bullet$]  The internal part, which consists in a finite control and the memory/working tapes (several tapes with one head per tape). 

\medskip

\item[$\bullet$]  The output tape on which there is a write-only head and from which nothing can be erased. 
\end{itemize}

\medskip

\begin{figure}[!h]
\begin{center}
\psfrag{a1}{\footnotesize$w_1$}
\psfrag{a2}{\footnotesize$w_2$}
\psfrag{a3}{\footnotesize$w_3$}
\psfrag{a4}{\footnotesize$w_4$}
\psfrag{a5}{\footnotesize$w_5$}
\psfrag{s1}{\footnotesize$s_1$}
\psfrag{s2}{\footnotesize$s_2$}
\psfrag{s3}{\footnotesize$s_3$}
\psfrag{s4}{\footnotesize$s_4$}
\psfrag{s5}{\footnotesize$s_5$}
\psfrag{s6}{\footnotesize$s_6$}
\psfrag{s7}{\footnotesize$s_7$}
\psfrag{q1}{\footnotesize$q_1$}
\psfrag{q2}{\footnotesize$q_2$}
\psfrag{q3}{\footnotesize$q_3$}
\psfrag{q4}{\footnotesize$q_4$}
\psfrag{q5}{\footnotesize$q_5$}
\psfrag{q6}{\footnotesize$q_6$}
\psfrag{r1}{\footnotesize$r_1$}
\psfrag{r2}{\footnotesize$r_2$}
\psfrag{r3}{\footnotesize$r_3$}
\psfrag{r4}{\footnotesize$r_4$}
\psfrag{Q}{$Q$}

\psfrag{b2}{\footnotesize$a(w)$}
\psfrag{input tape}{\footnotesize Input tape}
\psfrag{control units}{\footnotesize Finite control}
\psfrag{memory linear tapes}{\footnotesize Working tapes}
\psfrag{output tape}{\footnotesize Output tape}
\includegraphics[width= 9cm]{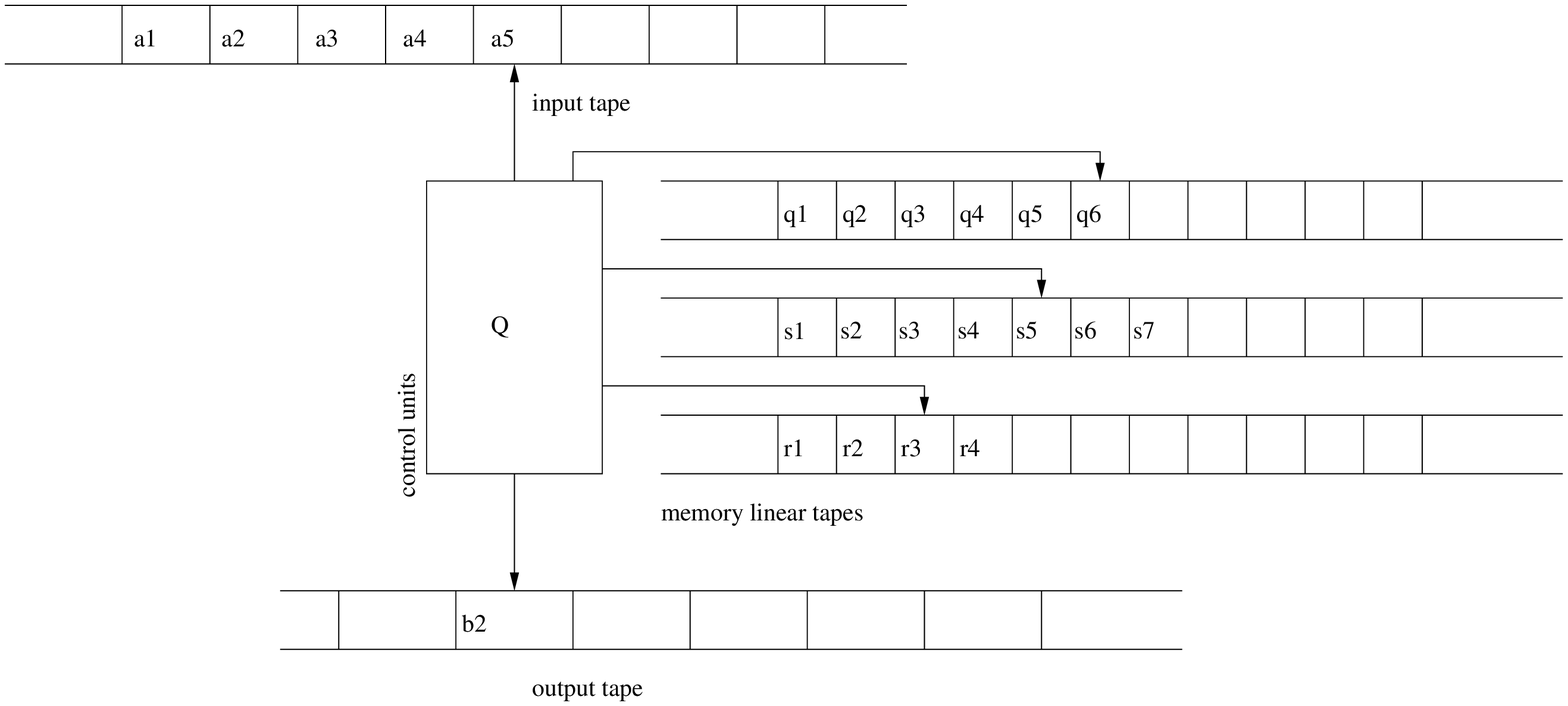}\caption{A multitape Turing machine}\label{fig:turingmachine}
\end{center}
\end{figure}

\medskip

Furthermore, the machine is said to be {\it one-way} or  {\it on-line} if the head of the input tape cannot go to the left. 
A (multi)stack machine is a one-way multi-tape deterministic Turing machine in which the memory is simply organized by stacks. 
This means that the head of each working/memory tape is always located on the rightmost symbol so that the tape can be though of simply as 
a stack with a head on the topmost symbol.

\begin{figure}[!h]
\begin{center}
\psfrag{a1}{\footnotesize$w_1$}
\psfrag{a2}{\footnotesize$w_2$}
\psfrag{a3}{\footnotesize$w_3$}
\psfrag{a4}{\footnotesize$w_4$}
\psfrag{a5}{\footnotesize$w_5$}
\psfrag{s1}{\footnotesize$s_1$}
\psfrag{s2}{\footnotesize$s_2$}
\psfrag{s3}{\footnotesize$s_3$}
\psfrag{s4}{\footnotesize$s_4$}
\psfrag{s5}{\footnotesize$s_5$}
\psfrag{s6}{\footnotesize$s_6$}
\psfrag{s7}{\footnotesize$s_7$}
\psfrag{q1}{\footnotesize$q_1$}
\psfrag{q2}{\footnotesize$q_2$}
\psfrag{q3}{\footnotesize$q_3$}
\psfrag{q4}{\footnotesize$q_4$}
\psfrag{q5}{\footnotesize$q_5$}
\psfrag{q6}{\footnotesize$q_6$}
\psfrag{r1}{\footnotesize$r_1$}
\psfrag{r2}{\footnotesize$r_2$}
\psfrag{r3}{\footnotesize$r_3$}
\psfrag{r4}{\footnotesize$r_4$}
\psfrag{Q}{$Q$}

\psfrag{b2}{\footnotesize$a(w)$}
\psfrag{input tape}{\footnotesize Input tape}
\psfrag{control units}{\footnotesize Finite control}
\psfrag{memory linear tapes}{\footnotesize Working stacks}
\psfrag{output tape}{\footnotesize Output tape}
\includegraphics[width= 9cm]{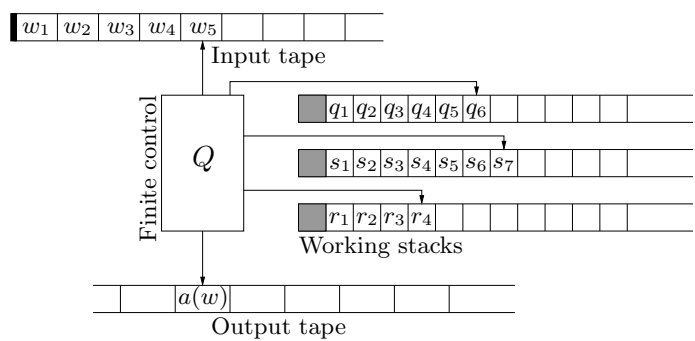}\caption{A one-way stack machine}\label{fig:stackmachine}
\end{center}
\end{figure}

Let us briefly describe how such a machine operates.  A move on a multistack machine $\mathcal M$ is based on:

\begin{itemize}

\medskip

  \item[$\bullet$]
 The current state of the finite control.
 
 \medskip
 
 \item[$\bullet$] The input symbol read. 
 
 \medskip
 
 \item[$\bullet$] The top stack symbol on each of its stacks. 

\end{itemize}

\medskip

Based on these data, a move of the multistack machine consists in:

\begin{itemize}

\medskip

  \item[$\bullet$] Change the finite state control to a new state.
  
   \medskip
 
 \item[$\bullet$] For each stack, replace the top symbol by a (possibly empty) string of stack symbols.   
 The choice of this string of symbols only depends on the input symbol read, the state of the finite control and on 
 the top symbol of each stack.
 
 \medskip
 
 \item[$\bullet$] Move the head of the input tape to the right.

\end{itemize}

\begin{rem}\label{rem:eps} Again, to make the machine deterministic, a move is uniquely determined by the knowledge of the input symbol read, 
 the state of the finite control and on 
 the top symbol of each stack.

As for pushdown automata, a multistack machine $\mathcal M$ can also possibly perform an $\epsilon$-move: a move for which the head of the input tape does not move. 
The possibility of such a move depends only on the current state of the finite control and the  top stack symbol on each of the stacks.  

  After reading a symbol of an input word $w$, the finite state control of 
 $\mathcal M$ could have reached a state $q$ from which $\epsilon$-moves are still possible. In that case, 
 we ask $\mathcal M$ 
to perform all possible $\epsilon$-moves before reading the next input symbol. Also,  
 a multistack machine is not allowed to stop its computation in a state from which an $\epsilon$-move is possible. 
\end{rem}

After reading an input word $w$,  
%\footnote{Formally, the word placed on the input tape should be $w\$$, where $\$$ is 
%a special symbol. This would allow the machine to know when the reading of the input ends. However, this subtlety is of 
%no importance for our purpose.}, 
$\mathcal M$ produces a output symbol $a(w)$ that belongs to a finite output alphabet.  
The symbol $a(w)$ depends only on the state of the finite control and the top symbol of each stack. Given an integer $k\geq 2$, 
a $k$-multistack machine is a multistack machine that takes as input the base-$k$ expansion of an integer (that is, for which the input alphabet is 
$\Sigma_k$). In that case, the sequence $a(\langle n\rangle_k)_{n\geq 0}$ is called the {\it output sequence} produced by $\mathcal M$.  
With these definition, a deterministic $k$-pushdown automata is nothing else than a $k$-multistack machine with a single stack.

\begin{figure}[!h]
\begin{center}
\psfrag{a1}{\footnotesize$w_1$}
\psfrag{a2}{\footnotesize$w_2$}
\psfrag{a3}{\footnotesize$w_3$}
\psfrag{a4}{\footnotesize$w_4$}
\psfrag{a5}{\footnotesize$w_5$}
\psfrag{s1}{\footnotesize$s_1$}
\psfrag{s2}{\footnotesize$s_2$}
\psfrag{s3}{\footnotesize$s_3$}
\psfrag{s4}{\footnotesize$s_4$}
\psfrag{s5}{\footnotesize$s_5$}
\psfrag{s6}{\footnotesize$s_6$}
\psfrag{s7}{\footnotesize$s_7$}
\psfrag{q1}{\footnotesize$q_1$}
\psfrag{q2}{\footnotesize$q_2$}
\psfrag{q3}{\footnotesize$q_3$}
\psfrag{q4}{\footnotesize$q_4$}
\psfrag{q5}{\footnotesize$q_5$}
\psfrag{q6}{\footnotesize$q_6$}
\psfrag{r1}{\footnotesize$r_1$}
\psfrag{r2}{\footnotesize$r_2$}
\psfrag{r3}{\footnotesize$r_3$}
\psfrag{r4}{\footnotesize$r_4$}
\psfrag{Q}{$Q$}

\psfrag{b2}{\footnotesize$a(w)$}
\psfrag{input tape}{\footnotesize Input tape}
\psfrag{States}{\footnotesize $\stackrel{\mbox{Finite}}{\mbox{control}}$}
\psfrag{stack}{\footnotesize Stack}
\psfrag{output tape}{\footnotesize Output tape}
\includegraphics[width= 9cm]{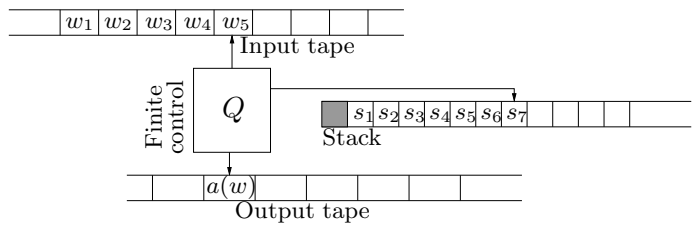}\caption{A deterministic pushdown automaton viewed as one-stack machine}\label{fig:PDAmachine}
\end{center}
\end{figure}

\subsubsection{The Hartmanis--Stearns problem for stack-machines} 
One can now define the class of real numbers generated by multistack machines as follows.  

\begin{defn} A real number $\xi$ can be generated by a $k$-multistack machine $\mathcal M$ if, 
for some integer $b\geq 2$, one has $\langle\{\xi\}\rangle_b=0.a_1a_2\cdots$, where $(a_n)_{n\geq 1}$  
corresponds to the output sequence produced by $\mathcal M$. 
A real number can be generated by a multistack machine if it can be generated by a $k$-multistack machine for some $k$. 
\end{defn}

Theorem \ref{thm:1bis} (resp.\ Theorem AB) can now be rephrased as follows: {\it no algebraic irrational can be generated by a one-stack (resp.\ zero-stack) machine}. 
Incidentally, this result turns out to provide a complete picture concerning the Hartmanis-Stearns problem for multistack machines. 
Indeed, since the two-stack machine has the same power as the general Turing machine, 
any computable number (and in particular any algebraic number) can be generated by a two-stack machine.

\subsection{Beyond pushdown automata}\label{sec:beyond}  

We stress now that the equivalence relation $\sim$ given in Definition \ref{def:equiv} and 
the associated Proposition \ref{prop:equiv} can be naturally extended to more general models of computation. 
For the multitape Turing machine, an internal configuration is determined by the state of the finite control and 
the complete knowledge of all the memory/working tapes (that is, the word written on each tape and the 
position of each head). But we do not need to concretely describe how the 
memory/working part of the machine is organized (tapes, stacks, or whatever). All what we need is to work with a machine 
with a one-way input tape and an output tape on which every symbol written is irrevocable. We  refer to this kind of machines as {\it one-way transducer-like machines}. 
The  Myhill-Nerode equivalence relation $\sim$ defined in~\ref{def:equiv} can be defined over the configurations of  a one-way transducer-like machine:  
two configurations being equivalent if, starting from each configuration, 
there is no way to distinguish them by feeding the machine 
with arbitrary inputs. Moreover Proposition~\ref{prop:equiv} also holds for these machines. 

\begin{prop}\label{prop:equiv-machines} 
Let $\xi$ be a real number generated by 
a one-way transducer like machine. Let us assume that the equivalence relation $\sim$ 
is nontrivial in the sense that there exist 
two distinct positive integers $n$ and $n'$ such that $C(n)\sim C(n')$. 
Then $\xi$ is either rational or transcendental. 
\end{prop}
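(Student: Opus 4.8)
The plan is to mimic almost verbatim the proof of Proposition~\ref{prop:equiv}, replacing ``internal configuration of a pushdown automaton'' by ``internal configuration of a one-way transducer-like machine''. The only feature of the pushdown automaton that was actually used in that proof is the following: once two inputs $w_n=\langle n\rangle_k$ and $w'_n=\langle n'\rangle_k$ lead to equivalent configurations, then for every word $w\in\Sigma_k^*$ the outputs satisfy $a_{[w_nw]_k}=a_{[w'_nw]_k}$, because from equivalent configurations the machine cannot be distinguished by any further input. This is exactly the definition of $\sim$ (Definition~\ref{def:equiv}), extended to one-way transducer-like machines as described just before the statement, so it transfers without change.

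First I would fix a real number $\xi$ generated by a one-way transducer-like machine $\mathcal M$, write $\langle\{\xi\}\rangle_b=0.a_1a_2\cdots$ with ${\bf a}:=(a_n)_{n\ge1}$ its output sequence, and pick two distinct positive integers $n<n'$ with $C(n)\sim C(n')$. Setting $w_n:=\langle n\rangle_k$ and $w'_n:=\langle n'\rangle_k$, the equivalence gives $a_{[w_nw]_k}=a_{[w'_nw]_k}$ for all $w\in\Sigma_k^*$; specializing to all $w$ of length $\ell$ yields $a_{k^\ell n+i}=a_{k^\ell n'+i}$ for every $i\in[0,k^\ell-1]$, exactly as in (\ref{eq:Nerode}). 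Then, exactly as in the proof of Proposition~\ref{prop:equiv}, I would set $U_\ell:=a_1\cdots a_{k^\ell n-1}$ and $V_\ell:=a_{k^\ell n}\cdots a_{k^\ell n'-1}$, observe that $U_\ell V_\ell^{1+1/(n'-n)}$ is a prefix of ${\bf a}$, and compute
$$
\frac{\vert U_\ell V_\ell^{1+1/(n'-n)}\vert}{\vert U_\ell V_\ell\vert}
= 1+\frac{1}{n'-1/k^\ell}\ \ge\ 1+\frac{1}{n'-1}\,,
$$
an exponent independent of $\ell$, so that $\Dio({\bf a})\ge 1+1/(n'-1)>1$. Proposition~ABL then forces $\xi$ to be rational or transcendental.

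There is essentially no obstacle here beyond checking that the extension of $\sim$ to one-way transducer-like machines still has the needed property, namely that $C(n)\sim C(n')$ implies $\tau(\delta(q_0,\#,w_nw))=\tau(\delta(q_0,\#,w'_nw))$ for all $w$; this is immediate from the definition of equivalence together with the fact that such a machine scans its input strictly from left to right and never rewrites an output symbol, so the computation on $w_nw$ (resp.\ $w'_nw$) factors as ``reach $C(n)$ (resp.\ $C(n')$), then run on $w$''. The slight subtlety to mention is that one must use that the input head is one-way (so the configuration after reading $w_n$ genuinely determines the future behaviour) and that the output is write-only and irrevocable (so the $n$-th output digit is determined once the relevant input prefix has been consumed); both are part of the definition of a one-way transducer-like machine. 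No new ideas are required: the proof is a transcription of the proof of Proposition~\ref{prop:equiv}.

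\begin{proof}[Proof of Proposition~\ref{prop:equiv-machines}]
The proof is identical to that of Proposition~\ref{prop:equiv}: the only property of $k$-pushdown automata used there is that $C(n)\sim C(n')$ implies $a_{[w_nw]_k}=a_{[w'_nw]_k}$ for every $w\in\Sigma_k^*$, which holds for any one-way transducer-like machine since such a machine reads its input from left to right and never erases an output symbol. Thus, with $n<n'$ and $C(n)\sim C(n')$, equation (\ref{eq:Nerode}) holds, the words $U_\ell V_\ell^{1+1/(n'-n)}$ defined as in the proof of Proposition~\ref{prop:equiv} are prefixes of ${\bf a}:=a_1a_2\cdots$, and
$$
\frac{\vert U_\ell V_\ell^{1+1/(n'-n)}\vert}{\vert U_\ell V_\ell\vert}
= 1+\frac{1}{n'-1/k^\ell}\ \ge\ 1+\frac{1}{n'-1}\,,
$$
so $\Dio({\bf a})\ge 1+1/(n'-1)>1$. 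Proposition~ABL then implies that $\xi$ is rational or transcendental.
\end{proof}
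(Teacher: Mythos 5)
Your proposal is correct and is exactly the argument the paper intends: the paper proves Proposition~\ref{prop:equiv-machines} by simply asserting that the proof of Proposition~\ref{prop:equiv} carries over, and you have spelled out why the only property of the pushdown automaton used there (equivalent configurations yield identical outputs on all continuations, which holds for any one-way machine with irrevocable output) transfers. The only blemish is inherited from the paper itself: the intermediate step $1+\frac{1}{n'-1/k^{\ell}}\ge 1+\frac{1}{n'-1}$ actually goes the other way, but since the ratio is still bounded below by $1+1/n'>1$ uniformly in $\ell$, the conclusion $\Dio({\bf a})>1$ and the application of Proposition ABL are unaffected.
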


This general result provides a method to prove the transcendence of real numbers which can be generated by machines more 
powerful than a 
$k$-pushdown automata. For instance, it implies the transcendence of the ternary number 
$$ \xi_3:= 0.1101201100101101001011001101001 10010110011010010112\dots $$
whose $n$-th ternary digit is equal to $2$ if the binary expansion of $n$ is of the form $1^k0^k1^k$, for some $k\in\mathbb{N}^*$, 
to $1$ if the binary expansion of $n$ has a odd number of occurrences of ones, and to $0$ otherwise. 
This number cannot be generated by a $k$-pushdown automaton because the set of words of the form $1^k0^k1^k$, for some $k\in\mathbb{N}^*$ is not a context-free language.  However, this language is context-sensitive which implies that $\xi_3$ can be generated by some kind of one-way transducer like machine (a linear bounded automaton). 
Proposition~\ref{prop:equiv-machines} can then be used to prove that $\xi_3$ is transcendental for one can show that $C(10)\sim C(20)$  
for every one-way transducer-like machine generating $\xi_3$.

%%%%%%%%%%%%%%%%%%%%%%%%%%%%%%%%%%%%%%%%%%
\section{Concluding remarks}\label{sec:end}

We end this paper with several comments concerning factor complexity, transcendence measures, and continued fractions, 
also providing  possible directions for further research.

\subsection{Links with factor complexity}\label{sec:comp} 

Another interesting way to tackle problems concerned with 
the expansions of classical constants in integer bases  
is to consider the {\it factor complexity} of real numbers. 
Let $\xi$ be a real number, $0\leq \xi<1$, and $b\geq 2$ be a positive integer.  Let us denote by 
${\bf a}=(a_n)_{n\geq 1}\in\Sigma_b^{\mathbb N}$ its base-$b$ expansion.  
The complexity function of $\xi$ with respect to the base $b$ is the function that associates  
with each positive integer $n$ the positive integer  
$$
p(\xi,b,n) := \mbox{Card} \{(a_j,a_{j+1},\ldots, a_{j+n-1}), \; j \geq 1\}.
$$
When $\xi$ does not belongs to $[0,1)$, we just set $p(\xi,b,n) := p(\{\xi\},b,n)$.  

To obtain lower bounds for the complexity of classical mathematical constants remains 
a famous challenging problem. In this direction, the main result concerning algebraic numbers was obtained by 
Bugeaud and the first author \cite{AB07} who proved that 
\begin{equation}\label{eq: complexity}
\lim_{n\to \infty} \frac{p(\xi,b,n)}{n} = + \infty \, ,
\end{equation}
for all algebraic irrational numbers $\xi$ and all integers $b\geq 2$.  
This lower bound implies Theorem AB for it is well-known that a real number generated by a finite automaton has factor complexity in $O(n)$ \cite{Co72}.   
We stress that the situation is really different with pushdown automata and tag machines. Indeed, given a positive integer $d$, there exist pushdown automata whose 
output sequence has a factor complexity 
growing at least like $n^d$ \cite{Mo08}, while tag machines can output sequences with quadratic complexity (see for instance \cite{Pa84}).  
In particular, Theorems \ref{thm:2bis} and \ref{thm:1bis} do not follow 
from  (\ref{eq: complexity}).   
We now exemplify this difference by providing lower bounds for the complexity of the 
two numbers $\xi_1$ and $\xi_2$ defined in Sections~\ref{sec: morphism} and~\ref{sec:PDA}.   

\subsubsection{A lower bound for $p(\xi_1,3,n)$}   It follows from the definition of the number $\xi_1$ that its ternary expansion is the fixed point of the morphism 
$\mu$ defined by $\mu(0)= 021$, $\mu(1)=012$, $\mu(2)=2$. We note that the letter $2$ has clearly bounded growth ($\vert \mu^n(2)\vert =1$ for all $n\geq 0$) 
and that $\mu^{\omega}(0)$ contains arbitrarily 
large blocks of consecutive occurrences of the letter $2$. Then, a classical result of Pansiot \cite{Pa84} implies that the complexity of the infinite word $\mu^{\omega}(0)$ is 
quadratic. In other words, one has : 
$$
c_1n^2 < p(\xi_1,3,n) < c_2n^2\,,
$$
for some positive constants $c_1$ and $c_2$.  
\subsubsection{A lower bound for $p(\xi_2,2,n)$}  
Recall that the binary number   $\xi_2$  
is defined as follows: its $n$-th binary digit  
is $1$ if the difference between the number of occurrences of the digits $0$ and $1$ in the binary expansion of $n$ is at 
most $1$, and is $0$ otherwise. 
We outline a proof of the fact that 
$$
p(\xi_2,2,n)=\Theta(n\log^2n) \, . 
$$ 

We can first infer from \cite{LG12} that 
$
p({\xi_2},2,n)=O(n(\log n)^2)$,  
for this sequence is generated by a pushdown automaton with only one ordinary stack symbol.  
In order to find a lower bound  for $p({\xi_2},2,n)$, we are going to describe a tag machine-like process (over an infinite alphabet) generating the binary expansion of $\xi_2$. 
We first notice that another way to understand the action of the $2$-PDA $\mathcal A$ in Figure \ref{fig:AP} that generates the binary expansion of $\xi_2$ is to unfold it. 
This representation, given in  Figure~\ref{fig:APunfolded}, corresponds to the transition graph  of $\mathcal A$: states in this graph are given by all possible configurations  and transitions between configurations are just labelled by the input digits $0$ or $1$. 
 In  Figure~\ref{fig:APunfolded}, the notation $qX^n$ means that, in this configuration, $\mathcal A$ is in state $q$ and the content of the stack is $XX\cdots X$ ($n$ times). 
\begin{figure}[!h]
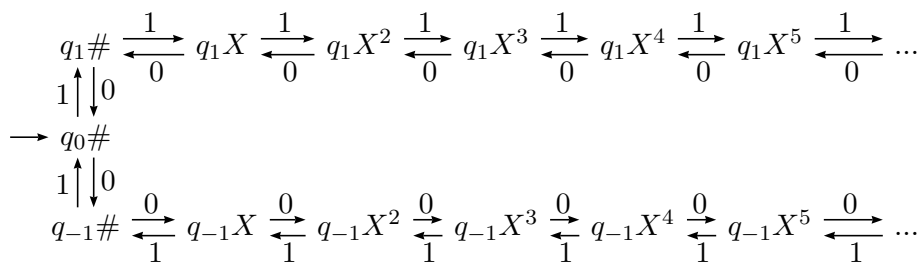

\begin{center}
\VCDraw{%

 \begin{VCPicture}{(-2,-3)(19,2.5)}
% states
\HideState
\StateVar[...]{(18,2)}{q_1s}
\StateVar[...]{(18,-2)}{rs}

%\LargeState
% \StateVar[s\#]{(-2,2)}{sE}
 \StateVar[q_1\#]{(0,2)}{q_1E} 
 \StateVar[q_1X]{(3,2)}{q_1X} 
 \StateVar[q_1X^2]{(6,2)}{q_1X2} 
 \StateVar[q_1X^3]{(9,2)}{q_1X3}
 \StateVar[q_1X^4]{(12,2)}{q_1X4} 
 \StateVar[q_1X^5]{(15,2)}{q_1X5} 
 \StateVar[q_0\#]{(0,0)}{q_0E}
\StateVar[q_{-1}\#]{(0,-2)}{rE} 
 \StateVar[q_{-1}X]{(3,-2)}{rX}
 \StateVar[q_{-1}X^2]{(6,-2)}{rX2} 
 \StateVar[q_{-1}X^3]{(9,-2)}{rX3} 
 \StateVar[q_{-1}X^4]{(12,-2)}{rX4}
 \StateVar[q_{-1}X^5]{(15,-2)}{rX5}

% initial--final
\Initial[w]{q_0E}
% transitions 
%\LoopS{sE}{0}

%\EdgeL{sE}{q_1E}{1}
\ForthBackOffset 
\EdgeL{q_1E}{q_1X}{1} \EdgeL{q_1X}{q_1E}{0}
\EdgeL{q_1X}{q_1X2}{1} \EdgeL{q_1X2}{q_1X}{0}
\EdgeL{q_1X2}{q_1X3}{1} \EdgeL{q_1X3}{q_1X2}{0}
\EdgeL{q_1X3}{q_1X4}{1} \EdgeL{q_1X4}{q_1X3}{0}
\EdgeL{q_1X4}{q_1X5}{1} \EdgeL{q_1X5}{q_1X4}{0}
\EdgeL{q_1X5}{q_1s}{1} \EdgeL{q_1s}{q_1X5}{0}

\EdgeL{q_1E}{q_0E}{0}
\EdgeL{q_0E}{q_1E}{1} 
\EdgeL{q_0E}{rE}{0}
\EdgeL{rE}{q_0E}{1} 
\EdgeL{rE}{rX}{0} \EdgeL{rX}{rE}{1}
\EdgeL{rX}{rX2}{0} \EdgeL{rX2}{rX}{1}
\EdgeL{rX2}{rX3}{0} \EdgeL{rX3}{rX2}{1}
\EdgeL{rX3}{rX4}{0} \EdgeL{rX4}{rX3}{1}
\EdgeL{rX4}{rX5}{0} \EdgeL{rX5}{rX4}{1}
\EdgeL{rX5}{rs}{0} \EdgeL{rs}{rX5}{1}

\end{VCPicture}
}

\caption{The transition graph of $\A$}\label{fig:APunfolded}
\end{center}
\end{figure}

In Figure~\ref{fig:APsubstitution},  states of the transition graph has been renamed as follows:  configurations are replaced with integers, where 
reading a $1$ in state $n$ leads to a move to state $n+1$ and reading a $0$ in state $n$ leads to a move to state $n-1$. 
We easily see that the output state is just the difference between the number of $1$'s and $0$'s in the input word.  Thus the $n$-th binary digit of $\xi_2$ is equal to $1$  
 if and only if the reading of the binary expansion of $n$ by this infinite automaton ends in one of the three states labelled by
  $0$, $-1$ and $1$.  

\begin{figure}[!h]
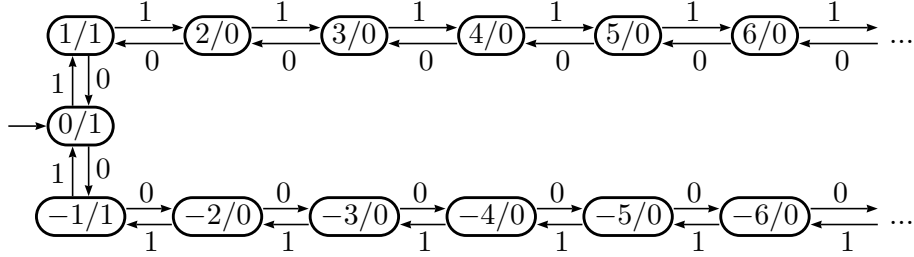

\begin{center}

\VCDraw{%

 \begin{VCPicture}{(-2,-3)(19,2.5)}
% states
\HideState
\StateVar[...]{(18,2)}{ps}
\StateVar[...]{(18,-2)}{rs}
\ShowState
% \StateVar[s/1]{(-2,2)}{sE}
 \StateVar[1/1]{(0,2)}{pE} 
 \StateVar[2/0]{(3,2)}{pX} 
 \StateVar[3/0]{(6,2)}{pX2} 
 \StateVar[4/0]{(9,2)}{pX3}
 \StateVar[5/0]{(12,2)}{pX4} 
 \StateVar[6/0]{(15,2)}{pX5} 
 \StateVar[0/1]{(0,0)}{qE}
\StateVar[-1/1]{(0,-2)}{rE} 
 \StateVar[-2/0]{(3,-2)}{rX}
 \StateVar[-3/0]{(6,-2)}{rX2} 
 \StateVar[-4/0]{(9,-2)}{rX3} 
 \StateVar[-5/0]{(12,-2)}{rX4}
 \StateVar[-6/0]{(15,-2)}{rX5}

% initial--final
\Initial[w]{qE}
% transitions 
%\LoopVarS{sE}{0}
%\EdgeL{sE}{pE}{1}
\ForthBackOffset 
\EdgeL{pE}{pX}{1} \EdgeL{pX}{pE}{0}
\EdgeL{pX}{pX2}{1} \EdgeL{pX2}{pX}{0}
\EdgeL{pX2}{pX3}{1} \EdgeL{pX3}{pX2}{0}
\EdgeL{pX3}{pX4}{1} \EdgeL{pX4}{pX3}{0}
\EdgeL{pX4}{pX5}{1} \EdgeL{pX5}{pX4}{0}
\EdgeL{pX5}{ps}{1} \EdgeL{ps}{pX5}{0}

\EdgeL{pE}{qE}{0}
\EdgeL{qE}{pE}{1} 
\EdgeL{qE}{rE}{0}
\EdgeL{rE}{qE}{1} 
\EdgeL{rE}{rX}{0} \EdgeL{rX}{rE}{1}
\EdgeL{rX}{rX2}{0} \EdgeL{rX2}{rX}{1}
\EdgeL{rX2}{rX3}{0} \EdgeL{rX3}{rX2}{1}
\EdgeL{rX3}{rX4}{0} \EdgeL{rX4}{rX3}{1}
\EdgeL{rX4}{rX5}{0} \EdgeL{rX5}{rX4}{1}
\EdgeL{rX5}{rs}{0} \EdgeL{rs}{rX5}{1}

\end{VCPicture}
}

\caption{Relabelling of the transition graph of $\A$}\label{fig:APsubstitution}
\end{center}
\end{figure}

The action of $0$ and $1$ can be summarized by $n \xrightarrow[]{0}n-1$ and  $n \xrightarrow[]{1}n+1$. This leads to a  tag machine-like process over an infinite alphabet 
$\mathcal{T}=(A,\sigma, s ,B,\varphi)$  for generating the expansion of $\xi_2$.  
The starting symbol is $s$, $A=\mathbb{Z}\cup\{s\}$, $\sigma$ is defined by $\sigma(s)=s1$ and $\sigma(n)=(n-1)(n+1)$, 
$B=\{0,1\}$, $\varphi(-1)= \varphi(0)= \varphi(1)= 1$, and $\varphi(e)= 0$ if $e\notin\{s,-1,0,1\}$. Then we have $\langle \xi\rangle_2 =0.\varphi(\sigma^\omega(s))$, 
where the infinite word 
$$\sigma^\omega(s)=s\,1 \,0 \,2 \,(-1)\,1\,1\,3\,(-2)\,0\,0\,2\,0\,2\,2\,4\,(-3)\,(-1)\,(-1)\,1\,(-1)\,1\,1\,3\,(-1)\,1\,1\cdots$$ 
is the unique fixed point of the morphism $\sigma$.   
%For $k\in\mathbb N$ and  $x\in\mathbb Z$, we note that the word $\varphi(\sigma^k(x))$ is almost always equal to $0^{2^k}$.  
%More precisely, $\sigma^k(x)$ contains an occurrence of $-1$, $0$, or $1$ if and only if $x\in [-(k+1), k+1]$. 
%Given a positive integer $n$, there exists a unique $k$ such that $2^k\leq n < 2^{k+1}$. 

%Furthermore,  
%any factor of length $n$ of $\sigma^\omega(s)$  is a factor of some $\sigma^k(x)\sigma^k(y)$, where $xy$ is a factor of length two of $\sigma^\omega(s)$ and thus   
%any factor of length $n$ of $\varphi(\sigma^\omega(s))$ is a factor of some $\varphi(\sigma^k(x)\sigma^k(y))$. This gives that  
%$$
%p(\xi_2,2,n)=O(n\log^2n)\, .
%$$ 
The strategy consists now in finding sufficiently many different right special factors, that is factors 
$w$ of $\varphi(\sigma^\omega(s))$ for which both factors $w0$ and $w1$ also occur in $\varphi(\sigma^\omega(s))$.  
% Any right special factor of $\varphi(\sigma^\omega(s))$ of length $n$ is contained in  
%some $\varphi(\sigma^k(x)\sigma^k(y)\sigma^k(z))$ and $\varphi(\sigma^k(x)\sigma^k(y)\sigma^k(z'))$,  
%where both words $xyz$ and $xyz'$ are  factors of  $\sigma^\omega(s)$. 
Arguing  as in ~\cite[Lemma 1.13]{LG08}, one can actually show that, for every pair 
$$
(p,q)\in {\mathcal E}:=\{(p,q)\in\mathbb{N}^2 \mid 1\le p\le q\le k-2\} \,,
$$ 
both words 
$$A:=\varphi(\sigma^k(k-2p)\sigma^k(k-2p+2)\sigma^k(k-2q))$$ 
and  
$$B:=\varphi(\sigma^k(k-2p)\sigma^k(k-2p+2)\sigma^k(-k-2))$$ 
occur in $\varphi(\sigma^\omega(s))$ and they have the same factor of length $n$, say $w(p,q)$,  
occurring at index $2^{k+1}+2^{q}-n$.   Furthermore, in the word $A$ the factor $w(p,q)$ is followed by a $1$, while in the word $B$ it is followed by a $0$.  
 Thus  $w(p,q)$ is  a right special factor.  It can also be extracted from ~\cite[Lemma 1.13]{LG08} that  the map $(p,q)\mapsto w(p,q)$ is injective on $\mathcal E$.  
This ensures the existence of at least  $ \frac{(k-2)(k-1)}{2}$ distinct right special factors of length $n$ in $\varphi(\sigma^\omega(s))$.   
Then it follows that 
$$
p(\xi_2,2,n+1) - p(\xi_2,2,n)\ge   \frac{(k-2)(k-1)}{2} \, ,
$$
from which one easily  deduces the lower bound :  
$$
p({\xi_2},2,n)\ge cn(\log n)^2\, ,
$$ 
for some positive constant $c$.

%%%%%%%%%%%%%%
\subsection{Quantitative aspects: transcendence measures and the imitation game}\label{sec:quant}  
We discuss here some problems related to the quantitative aspects of our results. 

\subsubsection{The number theory side: transcendence measures}  
A real number $\xi$ is transcendental if $\vert P(\xi)\vert >0$, for all non-zero integer polynomials $P(X)$.   A transcendence measure 
for $\xi$ consists in a limitation of the smallness of $\vert P(\xi)\vert$, thus refining the transcendence statement. In general,  
one looks for a nontrivial function $f$ satisfying: 
$$
\vert P(\xi)\vert > f(H,d) \, ,
$$ 
for all integer polynomials of degree at most $d$ and height at most $H$. 
Here, $H(P)$ stands for the na\"\i ve height of the
polynomial $P(X)$, that is, the maximum of the absolute values of
its coefficients. The degree and the height of an integer polynomial $P$ allow to take care of the complexity 
of $P$.   We will use here the following classification 
of real numbers defined by Mahler \cite{MahCr32} in 1932. 
For every integer $d \ge 1$ and every real number $\xi$, we denote by
$w_d (\xi)$ the supremum of the exponents $w$ for which
$$
0 < |P(\xi)| < H(P)^{-w}
$$
has infinitely many solutions in integer polynomials $P(X)$ of
degree at most $d$.  Further, we set 
$w(\xi) = \limsup_{d \to \infty} (w_d(\xi) /d)$ and,
according to Mahler \cite{MahCr32}, we say that $\xi$ is an
$$
\displaylines{
\hbox{ $A$-number, if $w(\xi) = 0$;} \cr
\hbox{ $S$-number, if $0< w(\xi) < \infty$;} \cr
\hbox{ $T$-number, if $w(\xi) = \infty $ and $w_d(\xi) < \infty$ for any
integer $d\ge 1$;} \cr
\hbox{ $U$-number, if $w(\xi) = \infty $ and $w_d(\xi) = \infty$ for some
integer $d\ge 1$.}\cr}
$$
An important feature of this classification is that two transcendental real   
numbers that belong to different classes are algebraically independent.  
The $A$-numbers are precisely the algebraic numbers and,
in the sense of the Lebesgue measure, almost all numbers are $S$-numbers.

 A Liouville number is a real number $\xi$ such that for any positive real number $\rho$ 
the inequality 
$$
\left\vert \xi -\frac{p}{q}\right\vert < \frac{1}{q^{\rho}}
$$
has a at least one solution $(p,q)\in\mathbb Z^2$, with $q>1$. Thus $\xi$ is a Liouville number if, and only if, $w_1(\xi)=+\infty$. 

\medskip

Let $\xi$ be an irrational real number defined through its base-$b$ expansion, say $\langle \{\xi\}\rangle_b :=  0.a_1a_2\cdots$. 
Let us assume that the base-$b$ expansion of $\xi$ can be generated either by a pushdown automata or by a tag machine with dilation factor larger than one. 
As recalled in  Proposition ABL (Section \ref{sec:ABL}), the key point for proving that $\xi$ is transcendental is to show that  $\Dio({\bf a})>1$, where ${\bf a}:= a_1a_2\cdots$.  
This comes down to finding two sequences of finite words $(U_n)_{n\geq 0}$ and $(V_n)_{n\geq 0}$, a sequence of rational numbers $\alpha_n$, 
and a real number $\delta>0$ such that 
the word $U_nV_n^{\alpha_n}$ is a prefix of ${\bf a}$, the length of the word $U_nV_n^{\alpha_n}$ increases, and 
\begin{equation}\label{in0}
\frac{\vert U_nV_n^{\alpha_n}\vert}{\vert U_nV_n \vert} \geq 1 + \delta \,.
\end{equation}
A look at the proofs of Theorems \ref{thm:2bis} and \ref{thm:1bis} show that one actually has, in both cases, the following extra property: there exists a real number $M$ such that 
\begin{equation}\label{eq:uniforme}
\limsup_{n\to\infty} \frac{\vert U_{n+1}V_{n+1}\vert}{\vert U_nV_n\vert} < M  \, .
\end{equation}
Using an approach introduced in \cite{AC06} and developed in \cite{AB11}, one can first prove that 
\begin{equation}\label{in1}
\Dio({\bf a})-1 \leq w_1(\xi) \leq c_1\Dio({\bf a}) \, ,
\end{equation}
for some real number $c_1$ that depends only on $\delta$ and $M$.  In particular, $\xi$ is a Liouville number if and only if $\Dio({\bf a})$ is infinite. 
Then  it is proved in \cite{AB11}, following a general approach introduced in \cite{AB10} and based on  a quantitative version of the subspace theorem,  
that this extra condition leads to transcendence measures. 
Indeed, taking all parameters into account, one could derive an upper bound of the type 
\begin{equation}\label{in2}
w_d(\xi) \leq \max\{w_1(\xi),(2d)^{c_2(\log 3d)(\log \log 3d)} \}\, ,
\end{equation}
for all positive integers $d$ and some real number $c_2$ that depends only on $\delta$ and $M$. The constants $c_1$ and $c_2$ can be made effective. 
In particular, we deduce from Inequalities \eqref{in1} and \eqref{in2} the following result.

\begin{thm}\label{thm:alt}
Let $\xi$ be an irrational real number such that $\langle \{\xi\}\rangle_b :=  0.a_1a_2\cdots$ and let ${\bf a}:= a_1a_2\cdots$. 
Let us assume that the base-$b$ expansion of $\xi$ can be generated either by a pushdown automata or by a tag machine with dilation factor larger than one. 
Then one of  the following holds.  
\begin{itemize}

\medskip

\item[{\rm (i)}]  $\Dio({\bf a})=+\infty$ and $\xi$ is a Liouville number. 

\medskip
 
 \item[{\rm (ii)}] $\Dio({\bf a})<+\infty$ and $\xi$ is a $S$- or a $T$-number. 

\end{itemize}
\end{thm}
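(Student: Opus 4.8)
The plan is to deduce Theorem~\ref{thm:alt} directly from the two quantitative inequalities \eqref{in1} and \eqref{in2}. First I would record that, by Proposition~\ref{prop:equivtag}, a tag machine with dilation factor larger than one produces exactly the sequences generated by a morphism with exponential growth; hence in both situations covered by the statement $\xi$ is irrational and, by Theorem~\ref{thm:2bis} or Theorem~\ref{thm:1bis}, transcendental, so that $\xi$ is not an $A$-number in Mahler's classification. The remaining task is then purely a matter of reading off \eqref{in1} and \eqref{in2}, provided one has first checked that the uniformity condition \eqref{eq:uniforme} --- under which these two inequalities were established --- is satisfied by the repetitions \eqref{in0} extracted in the proofs of Propositions~\ref{prop:diotag} and~\ref{prop:equiv}.

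Verifying \eqref{eq:uniforme} is immediate in both cases. In the morphic case the prefixes built in the proof of Proposition~\ref{prop:diotag} are $U_nV_n^{\delta_n}$ with $U_nV_n=\sigma^n(UbV)$, so applying $\sigma$ once multiplies the length $|U_nV_n|$ by at most the fixed constant $\|M_\sigma\|_\infty$, and \eqref{eq:uniforme} holds with $M=\|M_\sigma\|_\infty+1$. In the pushdown case the prefixes built in the proof of Proposition~\ref{prop:equiv} are $U_\ell V_\ell^{1+1/(n'-n)}$ with $|U_\ell V_\ell|=k^\ell n'-1$ for two fixed integers $n<n'$, so $|U_{\ell+1}V_{\ell+1}|/|U_\ell V_\ell|\to k$ and \eqref{eq:uniforme} holds with $M=k+1$. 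In both cases $\delta$ and $M$ depend only on the given machine, hence so do the constants $c_1$ and $c_2$ appearing in \eqref{in1} and \eqref{in2}.

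The conclusion then follows from a dichotomy on $\Dio({\bf a})\in[1,+\infty]$. If $\Dio({\bf a})=+\infty$, the left-hand inequality in \eqref{in1} forces $w_1(\xi)=+\infty$, i.e.\ $\xi$ is a Liouville number, which is alternative~(i). If $\Dio({\bf a})<+\infty$, the right-hand inequality in \eqref{in1} gives $w_1(\xi)\le c_1\Dio({\bf a})<+\infty$, and feeding this into \eqref{in2} gives $w_d(\xi)<+\infty$ for every integer $d\ge 1$; thus $\xi$ is not a $U$-number, and since it is also transcendental, hence not an $A$-number, it must be an $S$-number or a $T$-number, which is alternative~(ii). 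As these two cases are exhaustive, this finishes the plan.

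The real difficulty lies entirely in the two inputs used above, not in this deduction: \eqref{in1} upgrades the prefix repetitions of \eqref{in0} into bounds on the irrationality exponent $w_1(\xi)$, following \cite{AC06,AB11}, while \eqref{in2} rests on a quantitative version of the Subspace Theorem in the spirit of \cite{AB10,AB11}, the delicate point being to keep the dependence of $c_2$ on $\delta$ and $M$ explicit. A minor secondary point is to notice that in both branches of the proof of Theorem~\ref{thm:1bis} the two $\sim$-related configurations one produces are attached to fixed integers $n<n'$, which (together with the fact that the machine reads its input from the most significant digit, so that appending zeros multiplies the relevant prefix lengths by exactly $k$ at each step) is precisely what makes the lengths $|U_\ell V_\ell|$ grow geometrically, as needed for \eqref{eq:uniforme}.
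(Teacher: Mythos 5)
Your proposal is correct and takes essentially the same route as the paper, which itself only asserts \eqref{eq:uniforme} for the repetitions constructed in the proofs of Theorems~\ref{thm:2bis} and~\ref{thm:1bis} and then deduces the dichotomy from \eqref{in1} and \eqref{in2} exactly as you do (Liouville when $\Dio({\bf a})=+\infty$; otherwise $w_d(\xi)<\infty$ for all $d$, hence $S$ or $T$ since $\xi$ is transcendental). Your explicit check of \eqref{eq:uniforme} in the morphic and pushdown cases fills in a step the paper leaves implicit, and does so correctly.
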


Of course, in view of Theorem \ref{thm:alt}, it would be interesting to prove whether or not there exist such numbers for which $\Dio({\bf a})=+\infty$. 
In this direction, it is proved in \cite{AC06} that 
$\Dio({\bf a})$ is always finite when $\xi$ is generated by a finite automaton. We add here the following contribution to this problem.  

\begin{prop}
Let ${\bf a}:= a_1a_2\cdots$ be an aperiodic purely morphic word generated by a morphism $\sigma$ defined over a finite alphabet $A$. 
Set $M:= \max\{\vert \sigma(i)\vert \mid i\in A\}$.   
Then $\Dio({\bf a})\leq M+1$.
\end{prop}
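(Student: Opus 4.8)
The plan is to bound the Diophantine exponent of a purely morphic word $\mathbf{a} = \sigma^\omega(a_1)$ by controlling how long a repetition $UV^\alpha$ occurring as a prefix of $\mathbf{a}$ can be, relative to $|UV|$. First I would recall the standard desubstitution bookkeeping: if $\mathbf{a} = \sigma^\omega(a_1)$ and $P$ is a prefix of $\mathbf{a}$, there is a unique shortest prefix $P'$ of $\mathbf{a}$ with $P$ a prefix of $\sigma(P')$ and $P$ not a prefix of $\sigma$ applied to a strictly shorter prefix; one then has the crude inequality $|P| \ge |\sigma(P')| - M + 1 \ge |P'|$ (since $\sigma$ is non-erasing) together with $|P'| \le |P|$. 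The key point is that the "preimage" operation $P \mapsto P'$ shrinks lengths, so iterating it eventually lands inside the fixed starting block, and each application costs at most a bounded multiplicative factor $M$.

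Next I would take a prefix of $\mathbf{a}$ of the form $UV^\alpha$ with $\alpha \ge 1$ and $\rho := |UV^\alpha|/|UV|$ close to $\Dio(\mathbf{a})$, and argue that unless $|UV|$ is small (bounded in terms of $M$ only), we may desubstitute the whole configuration. Concretely, write $Q = UV^{\lceil\alpha\rceil}$ or a suitable prefix of it that is still a prefix of $\mathbf{a}$ and still "mostly periodic with period $|V|$," and let $Q'$ be its shortest preimage as above. One shows that $Q'$ is again a prefix of $\mathbf{a}$ carrying essentially the same period structure — a period of $\mathbf{a}$ at scale $|Q|$ pulls back, via the non-erasing morphism and the fact that $\sigma$ is injective on the relevant length scale once $|V|$ exceeds a constant, to a period at scale $|Q'|$ — with $|Q'| \le |Q|/\,(\text{growth}) $ roughly, and the ratio $|UV^\alpha|/|UV|$ changes only by a controlled amount. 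Repeating, since lengths strictly decrease, we reach a prefix whose total length is at most $O(M)$; at that stage $\rho$ is at most $(M+1)$ by inspection (a repetition inside a window of length $\le M$ plus one desubstitution step can only blow up the exponent by an additive $1$ over the multiplicative comparison). Tracking the constants carefully should give exactly $\Dio(\mathbf{a}) \le M+1$.

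I expect the main obstacle to be the period-pullback step: showing that a near-periodicity $UV^\alpha$ of $\mathbf{a}$ at a given scale genuinely descends under $\sigma$ to a near-periodicity of a shorter prefix with a comparable exponent, with no uncontrolled loss. The delicate issues are (a) aligning the repetition $V$ with the block boundaries of $\sigma$, i.e. handling the "fractional" overhang $\{ \alpha \} |V|$ and the $\leq M-1$ slack coming from $|P| $ versus $|\sigma(P')|$; (b) ensuring that once $|V|$ is larger than $M$ the morphism $\sigma$ is essentially injective on the factors involved, so that two equal images force equal preimages (this uses aperiodicity of $\mathbf{a}$ to rule out the degenerate case where $\sigma$ collapses a nontrivial period); and (c) bounding the accumulated multiplicative error over the (logarithmically many) desubstitution steps so that it telescopes to the clean bound $M+1$ rather than something like $M^{O(1)}$. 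I would handle (c) by comparing $|UV^\alpha| - |UV|$ (the "excess" of the repetition) with $|UV|$ directly at each step and noting that both are multiplied by comparable factors under $\sigma$, up to the additive $O(M)$ slack, which becomes negligible once the current prefix is long.
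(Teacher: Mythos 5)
Your plan goes in the opposite direction from what actually works here, and the two steps you yourself flag as delicate are genuine gaps, not technicalities. The period-pullback step (your obstacle (b)) is essentially a recognizability statement: you need that a near-periodicity of $\sigma(P')$ forces a near-periodicity of $P'$. For a general non-erasing, non-injective morphism this is false without substantial extra work (Moss\'e-type recognizability theorems require primitivity or a careful case analysis of letters with bounded growth; aperiodicity of ${\bf a}$ alone does not make $\sigma$ ``essentially injective on the relevant length scale''). Your obstacle (c) is also unresolved: nothing in the sketch explains why the accumulated loss over the desubstitution chain telescopes to the additive constant $1$ rather than to a power of $M$, and the claim that there are only logarithmically many desubstitution steps fails for morphisms with polynomial growth (e.g.\ $\sigma(a)=aab$, $\sigma(b)=b$), where letters of bounded growth make the chain much longer. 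Finally, the base case ``$\rho\le M+1$ by inspection'' is asserted, not proved; the constant $M+1$ has to come from somewhere, and your argument offers no mechanism producing it.

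The paper's proof avoids all of this by pushing the repetition \emph{forward} rather than pulling it back. One takes an extremal prefix $UV^s$ with $V$ primitive, $s$ maximal, and $\vert UV^s\vert/\vert UV\vert\ge M+1$, and observes that $\sigma(UV^s)=\sigma(U)\sigma(V)^{s'}$ is again a prefix of ${\bf a}$. The inequality $\vert UV^s\vert\ge (M+1)\vert UV\vert$ is calibrated exactly so that the word $W$ defined by $UV^s=\sigma(U)W$ has length at least $\vert V\vert+\vert\sigma(V)\vert$ and carries two periods, $\vert V\vert$ (from the conjugate $\widetilde V$) and $\vert\sigma(V)\vert$. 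Fine and Wilf then forces $\sigma(V)=\widetilde V^{\,k}$, so the repetition propagates to a strictly longer prefix $UV^{s-\alpha+ks'}$, contradicting the maximality of $s$. No injectivity of $\sigma$, no desubstitution, and no error accumulation are needed; aperiodicity enters only to guarantee that the maximal exponent $s$ is finite. If you want to salvage your approach you would have to prove a recognizability lemma for arbitrary non-erasing morphisms and control the constants along the whole desubstitution chain, which is a much harder route than the statement warrants.
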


\begin{proof}
Let us assume that $\sigma$ is prolongable on the letter $a$ and that 
$\sigma^{\omega}(a)=a_1a_2\cdots$. 
We argue now by contradiction by assuming that $\Dio({\bf a})>M+1$.

This assumption ensures that one can find  two finite words $U$ and $V$ and a real number $s>1$ such that :
\begin{itemize}

\item[{\rm (i)}] $UV^s$ is a prefix of ${\bf a}$, and $s$ is maximal with this property. 

\medskip

\item[{\rm (ii)}] One has 
$$\vert UV^s\vert/\vert UV\vert \geq M+1 \, .$$

\medskip

\item[{\rm (iii)}] $V$ is primitive (i.e.\ is non-empty and not the integral power of a shorter word) and  $s$ is maximal. 

%\medskip

%\item[{\rm (iv)}] $UV$ is long enough to ensure that $\vert\sigma(UV) \vert > \vert UV\vert$.  

\medskip
\end{itemize}

Not that since ${\bf a}$ is fixed by $\sigma$ then the word $\sigma(UV^s)$ is also a prefix of ${\bf a}$. 
By definition of $M$, it follows from (ii) that 
$$
UV^s = \sigma(U)W \, ,
$$
where  $W=\widetilde{V}^\alpha$ for some conjugate $\widetilde V$  of $V$ (i.e., $V=AB$ and $\widetilde{V}=BA$ for some $A,B$) and $\alpha\leq s$. 
On the other hand, $\sigma(V)$ is also a period of $W$ since $UV^s=\sigma(U)W$ is a prefix of  $\sigma(UV^s)=\sigma(U)\sigma(V)^{s'}$, for some $s'$. 
Thus $W$ has at least two periods: $\widetilde V$ and $\sigma(V)$. Furthermore, (ii) implies that 
$$
\vert UV^{s-1}\vert \geq M(\vert U\vert +\vert V\vert)
$$
and then 
$$
\vert W\vert = \vert UV^s\vert - \vert \sigma(U)\vert \geq \vert \sigma(V)\vert + \vert V\vert =  \vert \sigma(V)\vert + \vert \widetilde{V}\vert\, .
$$
We can thus apply Fine and Wilf's theorem (see for instance \cite[Chap. 1]{AS}) to the word $W$ and we obtain that there is a word of length 
$\gcd(\vert \widetilde{V}\vert,\vert\sigma(V)\vert)$ that is a period 
of $W$.  But, since $V$ is primitive, the word $\widetilde{V}$ is primitive too, and it follows that 
$\gcd(\vert \widetilde{V}\vert,\vert\sigma(V)\vert)=\vert\widetilde{V}\vert$. 
This gives that  
$\sigma(V)=\widetilde{V}^k$ for some positive integer $k$.  It follows that 
$$
\sigma(UV^s) = \sigma(U)\widetilde{V}^{ks'} = UV^{s-\alpha +ks'}
$$ 
is a prefix of ${\bf a}$. 
Now the inequality $\vert \sigma(UV^s)\vert > \vert UV^s\vert$ gives a contradiction with the maximality of $s$. 
This ends the proof. 
\end{proof}

%%%%%
\subsubsection{The computer science side: the imitation game}  
Theorems AB, \ref{thm:2bis}, and \ref{thm:1bis} show that some classes of Turing machines 
are too limited to produce the base-$b$ expansion of an algebraic irrational real number.  
Let $\xi$ be an irrational real number 
that can be generated by a $k$-pushdown automaton or by a tag machine with dilation factor larger than one. Then   
the results of Section \ref{sec:quant} could be rephrased  to provide a limitation of the way $\xi$ 
can be approximated by irrational algebraic numbers. 
In this section, we suggest to view things from a different angle,  changing our target. Indeed, we fix an algebraic irrational real number 
$\alpha$ and a base $b$, and ask for how long the base-$b$ expansion of $\alpha$  
can be imitated by outputs of a given class of Turing machines.   
%Note that though there do not exists a $k$-automaton able to produce the complete base-$b$ expansion of $\alpha$, 
%for every positive integer $N$, there are $k$-automata whose outputs coincides with ${\bf a}$ at least up to the $N$-th digit.  
%In other words, $k$-automatic sequences are dense with respect to the usual topology into the set of 
%infinite words.  

Let us explain now how to formalize our problem. We can naturally take the number of states as a measure of complexity of a $k$-automaton.   
 One can also defined the size of $k$-pushdown automata and 
tag machines as follows.  Let us define the size of a $k$-pushdown automaton $\mathcal A :=(Q,\Sigma_k,\Gamma,\delta,q_0,\Delta,\tau)$ to be 
$\vert Q\vert + \vert \Gamma\vert + L$, where $L$ is the maximal length of a word that can be added to the stack by the transition function $\delta$ of $\mathcal A$.
Let us also define the size of  a tag machine $\mathcal T:= (A,\sigma,a,\varphi,B)$ to be $\vert A\vert + L$, where $L:= \max \{\vert \sigma(i)\vert \mid i\in A\}$. 
Now, let us fix a class $\mathcal M$ of Turing machines among $k$-automata, $k$-pushdown automata, and tag machines.  Let $M$ be a positive integer. We stress that 
there are only finitely many such machines with size at most $M$.    
Then there exists 
a maximal positive integer $I(\alpha,M)$ for which  there exists a machine in $\mathcal M$ with size at most $M$  
whose output agrees with the base-$b$ expansion of $\alpha$ at least up to the $I(\alpha,M)$-th digit. 
We suggest the following problem. 

\begin{prob} 
Let $\alpha$ be an algebraic irrational real number and fix a class of Turing machines among $k$-automata, $k$-pushdown automata, and tag machines.   
Given a positive integer $M$, find an upper bound for $I(\alpha,M)$.  
\end{prob}

In the case of finite automata, we can give a first result toward this problem. Indeed, the factor complexity of the output ${\bf a}$ of a $k$-automaton with at most $M$ states satisfies  
 $p({\bf a}, n) \leq kM^2n$ (see for instance \cite{AS}). Let us denote respectively by $d$ and $H$ the degree and the height of $\alpha$. 
 Then the main result of \cite{Bu08} allows to extract the following upper bound : 
 \begin{eqnarray*}
 I(\alpha,M)&\leq& \max\left\{ (\max(\log H,e) 100kM^2)^{8\log 4kM^2} \right., 
 \\
&&\;\;\;\;\;\;\;\;\;\;\;\; \left. \left( (\log d) 10^{100} (kM^2)^{11/2}\log (kM^2)\right)^{2.1}\right\} \, .
 \end{eqnarray*}

%%%%%%%%%%%%%%%%%%%%%%%%%%%%%%%%%%%%%%%%%
\subsection{Computational complexity of the continued fraction expansion of algebraic numbers}\label{sec:cf}

Replacing  integer base expansions with continued fractions leads to similar problems. Rational numbers all have a finite continued fraction expansion, while quadratic 
real numbers correspond to eventually periodic continued fractions. In contrast, much less is known about the continued fraction expansion of algebraic real 
numbers of degree at least three such as $\sqrt[3]{2}$. 
In this direction, an approach based on the subspace theorem was introduced by Bugeaud and the first author \cite{AB05}. Recently, Bugeaud \cite{Bu13} 
shows that this approach actually leads to the following analogue of Proposition ABL. 

\medskip

\noindent {\bf\itshape Proposition {\rm\bf B}.} --- 
\emph{ Let $\xi$ be a real number with $\xi :=  [a_0,a_1,a_2,\ldots]$ where we assume that $(a_n)_{n\geq 1}$ is a bounded sequence of 
positive integers.  Let us assume that $\Dio({\bf a})>1$ where ${\bf a}:= a_1a_2\cdots$. 
 Then $\xi$ is either quadratic or transcendental. 
}

\medskip

In \cite{Bu13}, the author deduce from Proposition B that the continued fraction expansion of an algebraic real number of 
degree at least $3$ cannot be generated by a finite automaton. This provides the analogue of Theorem AB in this framework. 
As a direct consequence of our results and Proposition B, we obtain the following generalization of Bugeaud's result corresponding to the analogue of 
Theorems \ref{thm:2bis} and \ref{thm:1bis}.  

\begin{thm}
Let $\xi$ be an algebraic real number of degree at least $3$. Then the following holds. 
\begin{itemize}
\medskip

\item[{\rm (i)}]  The continued fraction expansion of $\xi$ cannot be generated by a one-stack machine, or equivalently, 
by a deterministic pushdown automaton. 
 
 \medskip
 
 \item[{\rm (ii)}] The continued fraction expansion of $\xi$ cannot be generated by a tag machine with dilation factor larger than one.

 \end{itemize}
\end{thm}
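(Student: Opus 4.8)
The plan is to deduce this result from Proposition~B exactly as Theorems~\ref{thm:2bis} and~\ref{thm:1bis} were deduced from Proposition~ABL; the only genuinely new point is a routine verification that the hypotheses of Proposition~B are met. Suppose $\xi = [a_0,a_1,a_2,\ldots]$ is an algebraic number of degree at least $3$ whose continued fraction expansion is generated by a deterministic pushdown automaton (equivalently, a one-stack machine) or by a tag machine with dilation factor larger than one. Then the sequence of partial quotients ${\bf a} := a_1a_2\cdots$ is, by construction, the output sequence of that machine, hence takes its values in a \emph{finite} output alphabet, which for a continued fraction consists of positive integers. In particular $(a_n)_{n\geq 1}$ is a bounded sequence of positive integers, so Proposition~B applies and tells us that $\xi$ is quadratic or transcendental as soon as $\Dio({\bf a})>1$. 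Since $\xi$ is algebraic of degree at least $3$, it is neither rational nor quadratic, and therefore $\Dio({\bf a})>1$ would force $\xi$ to be transcendental, a contradiction. Thus everything reduces, in each of the two cases, to establishing the inequality $\Dio({\bf a})>1$.

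For part~(i), since a one-stack machine is nothing but a deterministic pushdown automaton (Section~\ref{subsec:mult}), it suffices to treat the latter, and I would reuse the proof of Theorem~\ref{thm:1bis} verbatim: introducing the Myhill--Nerode-type equivalence relation $\sim$ of Definition~\ref{def:equiv}, distinguishing the two cases according to whether some set $\mathcal H_m$ is infinite or all of them are finite, and invoking the pigeonhole principle, one produces two distinct positive integers $n<n'$ with $C(n)\sim C(n')$; the computation in the proof of Proposition~\ref{prop:equiv} then exhibits, for every $\ell\geq 1$, a prefix of ${\bf a}$ of the form $U_\ell V_\ell^{1+1/(n'-n)}$ with $\vert U_\ell V_\ell^{1+1/(n'-n)}\vert/\vert U_\ell V_\ell\vert \geq 1+1/(n'-1)>1$, so that $\Dio({\bf a})>1$. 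For part~(ii), one repeats the proof of Theorem~\ref{thm:2bis}: by Proposition~\ref{prop:equivtag} a tag machine with dilation factor larger than one has internal morphism of spectral radius larger than one, hence ${\bf a}$ is generated by a morphism with exponential growth, and Proposition~\ref{prop:diotag} gives $\Dio({\bf a})>1$. In both cases one then applies Proposition~B (in place of Proposition~ABL) and concludes as in the first paragraph.

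As for the main obstacle: there is in fact no serious difficulty left, since the hard analytic input---Proposition~B, which is Bugeaud's continued fraction analogue of Proposition~ABL and ultimately rests on the Subspace Theorem---is available to us. The only points that call for a word of care are the two elementary observations above, namely that the partial quotients of $\xi$ are automatically bounded because the machine has a finite output alphabet, and that the hypothesis $\deg\xi\geq 3$ plays here precisely the role that irrationality played in Theorems~\ref{thm:2bis} and~\ref{thm:1bis}, ruling out \emph{both} the rational and the quadratic alternatives allowed by the conclusion of Proposition~B.
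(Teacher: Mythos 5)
Your proposal is correct and follows exactly the route the paper intends: the proofs of Theorems~\ref{thm:2bis} and~\ref{thm:1bis} are purely combinatorial statements establishing $\Dio({\bf a})>1$ for the relevant output sequences, and one simply substitutes Proposition~B for Proposition~ABL, using the degree-$\geq 3$ hypothesis to exclude both the rational and quadratic alternatives. You also correctly isolate the one point needing care --- that the partial quotients are bounded because the machine's output alphabet is finite --- which is precisely what makes Proposition~B applicable.
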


Using the approach introduced in \cite{AB12} and the discussion of Section \ref{sec:quant}, it will also be possible to produce transcendence measures analogous to Theorem \ref{thm:alt} 
for real numbers whose continued fraction expansion can be generated by deterministic pushdown automata or by a tag machine with dilation factor larger than one. 
%%%%%%%%%%%%%%%%%%%%%%%%%%%%%%%%%%%%%%%%%%

%%%%%%%%%%%%%%%%%%%%%%%%%%%%%%%%%%%%%%%%%

\end{document}